\tikzstyle{snode}=[circle ,draw=black,fill=white,thick, inner sep=0pt ,minimum size=1.7mm]
\tikzstyle{bnode}=[circle ,draw=black,fill=black,thick, inner sep=0pt ,minimum size=1.7mm]
\renewcommand{\baselinestretch}{1.1}
\newtheorem{theorem}{Theorem}
\newtheorem{claim}{Claim}
\newtheorem{lemma}{Lemma}
\newtheorem{problem}{Problem}
\newtheorem{cor}{Corollary}
\DeclareMathOperator{\lec}{lec}
\DeclareMathOperator{\diam}{diam}
\DeclareMathOperator{\cir}{cir}
\def\ml{l\kern -0.035cm\char 39\kern -0.03cm}
\begin{document}

\title{\bf Loose edge-connection of graphs}
\renewcommand\baselinestretch{1}\small

\author{$^{1}$Christoph Brause, $^2$Stanislav Jendro\ml, $^1$Ingo Schiermeyer
\\ \\
$^1$\small TU Bergakademie Freiberg \\
\small Institute of Discrete Mathematics and Algebra \\
\small 09596 Freiberg, Germany \\
\small \tt E-mail: brause@math.tu-freiberg.de, ingo.schiermeyer@tu-freiberg.de \\
\\
$^2$\small P.J. \v{S}af\'{a}rik University\\ 
\small Jesenn\'a 5, 04001 Ko\v{s}ice, Slovakia\\ 
\small \tt E-mail: stanislav.jendrol@upjs.sk \\
}

\date{\today}
\maketitle

\begin{abstract}
In the last years, connection concepts such as rainbow connection and proper connection appeared in graph theory and obtained a lot of attention. In this paper, we investigate the loose edge-connection of graphs. A connected edge-coloured graph $G$ is loose edge-connected if between any two of its vertices there is a path of length one, or a bi-coloured path of length two, or a path of length at least three with at least three colours used on its edges. The minimum number of colours, used in a loose edge-colouring of $G$, is called the loose edge-connection number and denoted $\lec(G)$. We  determine the precise value of this parameter for any simple graph $G$ of diameter at least 3. We show that deciding, whether $\lec(G) = 2$ for graphs $G$ of diameter 2, is an NP-complete problem. Furthermore, we characterize all complete bipartite graphs $K_{r,s}$ with  $\lec(K_{r,s}) = 2$. 
\medskip

\noindent \textbf{Keywords:} edge colouring, loose edge-connection, $2$-connected graphs, trees
\end{abstract}

\renewcommand\baselinestretch{1.2}\normalsize

\section{Introduction and motivation}

In this paper, we consider simple and undirected graphs only. For notation and graph theoretic terminology, we generally follow the book of West \cite{Wes}. Aside from that, we denote by $n = |V(G)|$ the \textit{order} of a graph $G$ and by $m =|E(G)|$ the \textit{size} of $G$.  The \textit{degree} of a vertex $v$ of $G$, denoted by $\deg_G(v)$, is the number of edges of $G$ incident with the vertex $v$. We will denote by $\Delta(G)$ and by $\delta(G)$ the \textit{maximum degree} and the \textit{minimum degree} of the graph $G$. Furthermore, for simplicity, we denote by $[k,l]$ the set of integers $\{k,k+1,\ldots,l\}$ if $k\leq l$. Note that $[k, l] = \emptyset$ for $k > l$. 

Two edges are adjacent if they share a common vertex. Two edges are 
\textit{semi-adjacent} if they are not adjacent but there is a third edge that is adjacent to both of them.

The \textit{weight} $w(e)$ of an edge $e=uv$ is the sum of degrees of its end-vertices, i.e. $w(e)=deg(u)+deg(v)$.
The \textit{reduced maximum weight} $\mathrm{rw}(G)$ of the graph $G$ is defined as $\mathrm{rw}(G)=max\{w(e)-1:e \in E(G)\}$. 

Let $G$ be a graph. A \emph{path} $L$ in $G$ is a subgraph of $G$ consisting of a vertex set $\{v_1,v_2,\ldots,v_k\}$ and edge set $\{v_1v_2,v_2v_3,\ldots, v_{k-1}v_k\}$. For simplicity, we write $L:v_1v_2\ldots v_k$ and, if $L$ contains two vertices $u,v$, then we denote by $uLv$ the subpath of $L$ between $u$ and $v$. Two vertices, say $u,v\in V(G)$, are \emph{connected} by a path $L$ if the end vertices of $L$ are $u$ and $v$. Such a path is denoted as an $u, v$-\textit{path}.

The \textit{circumference} of a graph $G$, $\mathrm{cir}(G)$, is the length of the longest cycle. We will denote by $C_k$ (resp. $K_k$, resp. $K_{r,s}$) the cycle (resp. the complete graph, resp. complete bipartite graph) on $k \geq 3$ (resp. $k$, resp. $r \geq 1$ and $s \geq 1$) vertices.

Let $\mathbb{A}=\{a,b,c,\ldots\}$ be a finite alphabet, i.e. a set of colours, digits, symbols, \ldots, whose elements are called \emph{letters}. A \emph{word} $A$ of length $k$ over $\mathbb{A}$ is a sequence of letters, say $A:a_1a_2a_3\ldots a_k$ where $a_i\in \mathbb{A}$ for all $i\in[1,k]$. Let us recall some properties that words can have (see e.g. \cite{CJ}).
A word $A$ is 
\begin{description}\itemsep-2pt
\item[\ldots\ \it{proper}] if  consecutive letters are not identical,
\item[\ldots\ \it{rainbow}] if it does not contain two identical letters,
\item[\ldots\ \it{conflict-free}] if at least one letter occurs exactly once in $A$,
\item[\ldots\ \it{monochromatic}] if all letters are identical,
\item[\ldots\ \it{odd}] if each letter of the alphabet $\mathbb{A}$ appears an odd number of times or zero times in $A$,
\item[\ldots\ \it{loose}] if either $A$ consists of exactly one letter, of exactly two different letters, or has at least three different letters,
\end{description}

Using properties of words over an alphabet $\mathbb{A}$, Brause, Jendro\ml, and Schiermeyer \cite{BJSch} have introduced a graph theoretic meta-concept as follows.

Consider a graph $G$. If there is an edge colouring $\phi:E(G)\to \mathbb{A}$, then we can associate to a path $L = v_1v_2\ldots v_{k}$ with $v_i\in V(G)$ for all $i\in[1,k]$, $L$ being a subgraph of $G$, a word $\phi(v_1v_2)\phi(v_2v_3)\ldots \phi(v_{k-1}v_k)$.

Now, let $\mathcal{P}$ be a property of words over the alphabet $\mathbb{A}$. 
Considering an edge colouring $\phi:E(G)\to \mathbb{A}$ we say that \emph{$L$ has the property $\mathcal{P}$} if the associated word $\phi(v_1v_2)\phi(v_2v_3)\ldots \phi(v_{k-1}v_k)$ has property $\mathcal{P}$.

Let $G$ be a connected graph, $\mathbb{A}$ be an alphabet with $k$ letters, $\mathcal{P}$ be a property of words, and $\phi:E(G)\rightarrow\mathbb{A}$ be an edge-colouring. The edge-colouring \emph{$\phi$} makes $G$ to be $\mathcal{P}$ edge-connected if any two distinct vertices of $G$ are connected by a path having the property $\mathcal{P}$. The minimum integer $k$, for which there exists an edge colouring $\phi:E(G)\to \mathbb{A}$ with $|\mathbb{A}|=k$ that makes $G$ to be $\mathcal{P}$ edge-connected, is the $\mathcal{P}$ \textit{edge-connection number} of $G$.

From a practical point of view, the $\mathcal{P}$ edge-connection of graphs plays an important role for security and accessibility in communication networks. While the information which one sends through a network from one node to another has to be protected by passwords, it is of high importance that the password sequences of information transferring paths meet some prescribed requirements. Since managing a whole bunch of passwords, that are assigned to direct information transferring paths between two nodes, is expensive, it is a natural question to ask for a minimum number of passwords securing the information transferring paths.
By representing each node as a vertex, each direct information transferring path between two nodes as an edge, each possible password by a different colour, and the prescribed requirements for the password sequences of information transferring paths by a property $\mathcal{P}$ for a word over the alphabet of colours, the above problem is translated to finding the $\mathcal{P}$ edge-connection number of a path.

Chartrand et al. \cite{CJMZ} introduced the concept of rainbow connection; that is, the concept of the $\mathcal{P}$ edge-connection where property $\mathcal{P}$ is rainbow. Let $\mathrm{rc}(G)$ denote the \textit{rainbow edge-connection number}.
Returning to the mathematical problem and its applications, the $\mathcal{P}$ edge-connection number of graphs have been intensively studied for some of the mentioned properties, for example if property $\mathcal{P}$ is rainbow \cite{Chan,CJMZ,HRS,HRSV1,HRSV2,LSS}, proper \cite{ABBFKS,borozan12}, odd \cite{BJSch}, or conflict-free \cite{Czap}. In this paper, we concentrate on the loose edge-connection, which is a relaxation of the concepts rainbow connection and conflict-free connection. A motivation to study the loose edge-connection comes also from the loose-colouring of embedded graphs, see e.g. \cite{Ne:05}, \cite{Cz:12}, or \cite{CJ}. Note, that the loose vertex-connection of simple connected graphs is studied in a more general version in \cite{BJSch-21}. 

We say that a path $L$, which is a subgraph of a graph $G$ with an edge colouring $\phi:E(G)\to [1, k]$, is \emph{loose edge-coloured} if the path consists of one edge, or exactly two differently coloured edges, or has at least three edges that are coloured with at least three distinct colours.

A connected graph $G$, coloured by an edge colouring $\phi:E(G)\rightarrow [1,k]$, is said to be \emph{loose edge-connected} if any two distinct vertices are connected by a loose coloured path.
The least integer $k$, for which we have an edge colouring $\phi:E(G)\to [1, k]$ that makes $G$ loose edge-connected, is called the \emph{loose edge-connection number} of $G$, denoted by $\lec(G)$.

Observe, that any rainbow connected graph is also loose connected. Also, if a graph $G$ is a spanning subgraph of a graph $H$ and  $\lec(G) = k$, then  $\lec(H) \leq k$.

In the sequel, a path in an edge coloured graph will be called \emph{monochromatic}, \emph{bi-chromatic}, and \emph{loose} if its edges are coloured with exactly one colour, exactly two colours, and with at least three colours, respectively.

A \emph{block} $B$ of a connected graph $G$ is an induced subgraph of $G$ of order at least $2$ which is $2$-connected and, with respect to this condition, maximal; that is, $B$ is $2$-connected, but for every $v\in V(G)\setminus V(B)$, the graph $G[V(B)\cup\{v\}]$ is disconnected or contains a cut-vertex.
We note that if $n(B) \geq 3$, then we say that $B$ is \emph{non-trivial}. Otherwise, if $n(B) = 2$ and $B$ is isomorphic to $K_2$, we say that $B$ is \emph{trivial}.

The \textit{block-cutpoint graph} of a graph $G$ is a bipartite graph $B(G)$ in which one partite set consists
of the cut-vertices of $G$, and the other has a vertex $b_i$ for each block $B_i$ of $G$. We include $vb_i$ as an edge of $B(G)$ if and only if $v \in B_i$. When $G$ is connected, its block-cutpoint graph is a tree whose leaves are blocks of $G$. These blocks are called \emph{leaf blocks}.
If $B$ is a trivial block and $e\in E(B)$ is the edge of $B$, then $e$ is a \emph{cut-edge} of $G$; that is, $G-e$ is disconnected.

Let $C(G)$ be the graph induced on the set $E(C)\subseteq E(G)$ of all cut-edges of $G$ consisting of all vertices of $G$ that are incident to the edges of the set $E(C)$. More precisely, 
$$V(C(G))=\left\{\bigcup_{uv\in E(C)}\{u,v\}\right\}\quad{\rm and}\quad E(C(G)) = E(C).$$
We denote by $\Delta(C(G))$ the maximum degree of $C(G)$. The graph $C(G)$ is called the \emph{cut-edge graph} of $G$.
 (see \cite{Wes}, p. 156).

Let $F_t$ be a graph obtained from a graph $F$ by attaching $t$ leaves, $t \geq 0$, to every vertex of $F$.

Let $H$ be a non-trivial connected graph, $F$ be a $2$-connected graph, and $t \geq 0$ be an integer. A graph $H$ is \textit{of type (t, F)} if $F \subseteq H \subseteq F_t$, and $H$ contains a vertex adjacent with exactly $t$ leaves.

The rest of this paper is organized as follows: Sections 2 - 7 contain some preliminary statements that are necessary for the proofs of our main results. In Sections 8 the characterization of all complete bipartite graphs with $\lec (K_{r,s}) = 2$ is given. In Section 10, the precise values of the loose edge-connection number for all non-trivial connected graphs of diameter at least 3 are determined . 


\section{Loose edge-connection of trees}

\begin{lemma}\label{le:tre}
If $G$ is a tree of the reduced weight $\mathrm{rw}(G)$, then  $\lec (G) = \mathrm{rw}(G)$.
\end{lemma}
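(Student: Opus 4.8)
The plan is to prove both inequalities with $k := \mathrm{rw}(G) = \max\{w(uv)-1 : uv \in E(G)\}$. The key structural fact I would exploit is that in a tree any two vertices are joined by a \emph{unique} path, so an edge-colouring $\phi$ makes $G$ loose edge-connected precisely when every such path is loose coloured; and since a tree has no triangle, I claim this global requirement is equivalent to the local one that $\phi$ give distinct colours to every pair of adjacent edges and to every pair of semi-adjacent edges (i.e. that $\phi$ be a strong edge-colouring). Indeed a path of length $2$ is loose iff its two adjacent edges differ; a path $v_0v_1v_2v_3$ is loose iff $\phi(v_0v_1),\phi(v_1v_2),\phi(v_2v_3)$ are three distinct colours, and as $v_0v_1$ and $v_2v_3$ are semi-adjacent through $v_1v_2$ and (no triangle) genuinely non-adjacent, this amounts to separating adjacent pairs together with this semi-adjacent pair; and any longer path is loose as soon as its first three edges carry three colours. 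Conversely, in a tree every semi-adjacent pair of edges is the pair of extreme edges of a length-$3$ path, so the local condition also suffices.

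With this reformulation the bound $\lec(G) \ge k$ is immediate. I would pick an edge $uv$ with $w(uv)-1 = k$ and note that the $\deg(u)+\deg(v)-1 = w(uv)-1$ edges incident with $u$ or with $v$ are pairwise adjacent or semi-adjacent: two of them meeting at $u$ (or at $v$) are adjacent, while an edge at $u$ other than $uv$ and an edge at $v$ other than $uv$ are semi-adjacent through $uv$, and non-adjacent since $G$ has no triangle. Hence any loose edge-colouring must use distinct colours on all of them, giving $\lec(G) \ge \deg(u)+\deg(v)-1 = \mathrm{rw}(G)$.

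For the reverse inequality I would build a loose edge-colouring with $k$ colours by a greedy sweep. Root $G$ at a vertex $r$ and colour the $\deg(r)$ edges from $r$ to its children with distinct colours, which is possible because $k \ge w(rc)-1 \ge \deg(r)$ for any child $c$ of $r$. Then process the remaining vertices in BFS order; when reaching a vertex $c$ with parent $v$ (so the parent-edge $vc$ is already coloured), I colour the edges from $c$ to its children one at a time, each time forbidding (a) the colours on the $\deg(v)$ edges incident with $v$ and (b) the colours already used on the earlier child-edges of $c$. Maintaining as an invariant that the edges at each processed vertex have pairwise distinct colours, the forbidden set when colouring the $j$-th child-edge of $c$ has size at most $\deg(v)+(j-1) \le \deg(v)+\deg(c)-2 = w(vc)-2 < k$, so a colour is always free. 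Finally I would verify that the resulting $\phi$ separates every adjacent and every semi-adjacent pair: adjacency-separation is the invariant; a semi-adjacent pair $e \ni x$, $f \ni y$ with $xy \in E(G)$ is separated because step (a) applied at whichever of $x,y$ is the child forced the new edges there to avoid all colours at the parent; and edges lying strictly below two distinct children of a common vertex are neither adjacent nor semi-adjacent. Thus $\phi$ is a loose edge-colouring and $\lec(G) \le k$.

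The step I expect to need the most care is the bookkeeping in the greedy sweep: pinning down that the only already-coloured edges that can clash with an edge being coloured are the at most $\deg(v)$ edges at its grandparent-side vertex $v$ together with the at most $\deg(c)-2$ sibling edges already treated, so that the forbidden count never exceeds $w(vc)-1 \le \mathrm{rw}(G)$, and that no conflict can cross between different branches of the rooted tree. The two reductions made at the outset — that absence of triangles makes the local condition coincide with being a strong edge-colouring, and that length-$\ge 3$ paths reduce to their first three edges — are routine once written out, and in fact identify $\lec(G)$ for a tree with its strong chromatic index.
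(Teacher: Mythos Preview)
Your proof is correct and follows essentially the same approach as the paper: a BFS-style greedy sweep that colours each edge avoiding the colours already used on adjacent and semi-adjacent edges, which is exactly the paper's ``$\varphi(e)=\min\{M\setminus F(e)\}$'' rule. Your write-up is in fact more complete than the paper's, since you supply the lower bound argument (the $\deg(u)+\deg(v)-1$ pairwise adjacent-or-semi-adjacent edges at a heaviest edge $uv$) and make explicit the equivalence with strong edge-colouring, both of which the paper leaves implicit in its ``It is easy to see that this algorithm works properly.''
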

\begin{proof} For the edge-colouring of $G$ we use the following adaptation of the FSB algorithm (see e. g. \cite{Wes}, p. 99). 

For a tree $G$ let $M = [1, \mathrm{rw}(G)]$ be the set of colours. 

\textbf{Input:} A tree $G$, a start vertex $u$, and the colour set $M$.

\textbf{Idea:} Maintain a set $R$ of vertices that have been reached by coloured edges but not searched and a set $S$ of vertices that have been searched, i.e. vertices of the set $S$ are incident only with coloured edges.
The set $R$ is maintained as a First-In First-Out list (queue), so the first vertices are the first vertices explored.

\textbf{Initialization:}  $R = \{u\}, S = \emptyset$.

\textbf{Iteration:} As long as $R \neq \emptyset$, we search for the first vertex $v$ of $R$. Then we colour each edge $e = vw$ of $G$ incident with $v$ and 
$w \notin R \cup S$, with colour $\varphi(e)=\min\{M \setminus F(e)\}$. (Here  set $F(e)$ is the set of colours used till now on edges adjacent or semi-adjacent with $e$.) Next the neighbour $w$ of $v$ is added to the back of $R$. If all the edges incident with $v$ are coloured, the vertex $v$ is removed from the front of $R$ and is placed in $S$.

It is easy to see that this algorithm works properly.
\end{proof}

\section{Graphs of type $R$}

A graph $H$ is of type $(t, R)$  if $R = C_3 = K_3$.
Then $R_t$ is the graph obtained from $K_3$ by attaching $t$ leaves on every vertex of it. Let $V(R_t) = \{x,y,z,x_i,y_i,z_i; i\in[1,t]\}$, $E(R_t) = \{xy, xz, zy, zz_i, yy_i, xx_i; i\in[1,t]\}$. Observe that $\Delta(C(R_t)) = \mathrm{rw}(C(R_t))=t$.

\begin{lemma}\label{le:R_t}
If $H$ is a graph of type $(t, R)$, then the following hold: 
\begin{enumerate} 
\item
If $t = 1$ and $|E(C(H))| = 1$, then $\lec(H) = 2$ .
\item 
If $t \leq 2$ and $|E(C(H))| \geq 2$, then  $\lec(H) = 3$.
\item
If $t \geq 3$, then $t \leq \lec(H) \leq t + 1$.

Moreover, $\lec(H) = t+1$ if and only if $H$ contains two vertices $b_1$ and $b_2$ such that $\deg_{C(H)}(b_1) + deg_{C(H)}(b_2) \geq 2t - 1$. 
\end{enumerate}
\end{lemma}

\begin{proof} 
\textbf{Case 1}. First we consider the graph $R_t$.
Observe that $\diam(R_t) = 3$ and $\lec(R_t) \geq \max\{3, t\}$. 
The following loose edge-colouring with three colours is suitable for $R_1$:
$\varphi(xy) = \varphi(zz_1) = 1$,  $\varphi(yz) = \varphi(xx_1) = 2$, and  $\varphi(xz) = \varphi(yy_1) = 3$.

The colouring $\varphi(xx_i)=\varphi(yy_i)=\varphi(zz_i)=i+1$ for every $i\in[3,t]$, $\varphi(xy) = \varphi(xx_1) = \varphi(yy_1) = 1$, $\varphi(xz) = \varphi(xx_2) = \varphi(zz_1) = 2$, and $\varphi(yz) = \varphi(yy_2) = \varphi(zz_2) = 3$ is a suitable loose edge-colouring of $R_t$ with $t + 1$ colours for all $t \geq 2$.

We need to show that $t + 1$ colours are necessary for $t \geq 3$. We start with the graph $R_3$. Assume that three colours are enough to get a loose connection of $R_3$. Then all three leaves attached to each vertex of $C_3$ of $R_3$ are coloured with three different colours. Consider the vertices $x$ and $y$. Let, w.l.o.g.,
$\varphi(xx_i)=\varphi(yy_i)=\varphi(zz_i)=i, i \in [1, 3]$. There is a loose $x_i, y_i$-path in $R_3$ such that $\{\varphi(xz), \varphi(yz)\} = [1, 3] \setminus \{i\}$.
But for any $j, j \neq i$, we also have 
$\{\varphi(xz), \varphi(yz)\} = [1, 3] \setminus \{j\}$.
A contradiction.

Assume that there is a loose colouring of $R_t$ with $t$ colours for $t \geq 4$, 
$\varphi(xx_i)=\varphi(yy_i)=\varphi(zz_i)=i, i \in [1, t]$, and let colours $1, 2$, and $3$ are used on the edges of $C_3$. If we delete from $R_t$ the edges coloured with colour $k \in [4, t]$, we get a loose colouring of $R_3$ with three colours. This contradicts the nonexistence of such a colouring of $R_3$.

 \textbf{Case 2.}
 Let $t\geq 3$. Consider first the graph $G_1 = R_t \setminus \{xx_t, xx_{t-1}, yy_t, yy_{t-1}\}$. It has the following loose edge-colouring $\varphi$ with $t$ colours:
 $\varphi(xx_i) = \varphi(yy_i) = \varphi(zz_i) = i$, $i \in [1, t - 3]$, 
$\varphi(xx_{t-2}) = \varphi(xz) = \varphi(zz_t) = t$, 
 $\varphi(yy_{t-2}) = \varphi(yz) = \varphi(zz_{t-1}) = t - 1$, and 
 $\varphi(xy) = \varphi(zz_{t-2}) = t - 2$.
 It is easy to see that any graph $H$ of type $(t, R)$ which is a subgraph of $G_1$ has a loose edge-colouring with $t$ colours.
 
 We need to show that $t + 1$ colours are necessary for $t \geq 3$ if $H$ has two vertices, say, $x$ and $z$, whose degree sum in $C(H)$ is at least $2t - 1$.
 Assume for the contrary that the graph $H$ has a loose edge-colouring $\varphi$ with $t$ colours.
 Consider the graph $H$ and its vertices $x, x_i,i \in [1, t - 1]$, $z$, and $z_j, j \in [1, t]$. Let, w.l.o.g., $\varphi(xx_i) = \varphi(zz_i) = i,i \in [1, t - 1]$, and $\varphi(zz_t) = t$. Let $\varphi(xz) = c$. Then the edges $xy$ and $yz$  have to be coloured with different colours, say $a$ and $b$. Then there is neither a loose $x_a, z_a$-path nor a loose $x_b, z_b$-path in $H$. 
 A contradiction.

\textbf{Case 3.}
Let $t \leq 2$. If $H$ contains at least three leaves, then  $\diam(H) = 3$, $H \subseteq R_2$, and, by Case 1, $\lec(H) = 3$. If $H$ has exactly two leaves, then one can easily show that $\lec(H) = 3$ and an edge-colouring of $C_3$ consists of three different colours. If $H$ has one leaf, we colour all the edges of the $3$-cycle of $C_3$ with colour $1$ and remaining edges of $H$with colour $2$.

This completes the proof of our Lemma \ref{le:R_t}.
\end{proof}

\section{Graphs of type $Q$}


Let $Q_t$ be a graph obtained from the cycle $C_4$ by attaching $t$ leaves, $t \geq 1$, to every vertex of $C_4$. Let 
$V(Q_t) = \{x, y, v, w, x_i, y_i, v_i, w_i; i \in [1, t]\}$  and $E(Q_t) = \{vx, wx, vy, wy, xx_i, yy_i, vv_i, ww_i; i \in [1, t]\}$.

\begin{lemma}\label{le:Q_t}
If $H$ is a graph of type $(t, Q)$ with $C_4 \subseteq Q \subseteq K_4$, then the following hold:
\begin{enumerate} 
\item[\rm (i)] If $1 \leq t \leq 2$, then $3 \leq \lec(H) \leq 4$. Moreover, $\lec(H) = 4$ if and only if the cycle $C_4$ of $H$ contains no diagonal and at least three vertices each adjacent to exactly two leaves.
\item[\rm (ii)] If $t = 3$, then $3 \leq \lec(H) \leq 4$. Moreover, $\lec(H) = 4$ if and only if the cycle $C_4$ of $H$ does not contain any diagonal, or $C_4$ contains a diagonal $xy$, $\deg_{C(H)}(x) = 3$ and $\deg_{C(H)}(y) = 3$, or $\deg_{C(H)}(y) = 2$ and $\deg_{C(H)}(v) = \deg_{C(H)}(w) = 3$.
\item[\rm (iii)] If $t \geq 4$, then $\lec(H) = t$ .
\end{enumerate}
\end{lemma}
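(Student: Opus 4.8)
The plan is to show both bounds $\lec(H)\ge t$ and $\lec(H)\le t$ for every graph $H$ of type $(t,Q)$ with $t\ge 4$. The lower bound is immediate: since $\Delta(C(H))=t$ (every vertex of the $4$-cycle carries $t$ leaves, and $C(H)$ is the union of these stars), the cut-edge graph $C(H)$ is a tree with $\mathrm{rw}(C(H))\ge t+1$ at a center, but more simply a star $K_{1,t}$ is a subtree of $H$ whose $t$ edges must all receive distinct colours (any two leaf-edges of a star together with the obligatory path through the star need to be distinguished, and a bi-chromatic path of length two cannot connect two leaves of the same center if both leaf-edges share a colour, while any longer path has only length $2$ available between sibling leaves)—so $\lec(H)\ge t$. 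I would state this carefully: if two leaves $x_a,x_b$ at the same vertex $x$ had $\varphi(xx_a)=\varphi(xx_b)$, the only $x_a,x_b$-path has length $2$ and is monochromatic, contradicting loose connection; hence the $t$ leaf-edges at each vertex use $t$ distinct colours, giving $\lec(H)\ge t$.

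For the upper bound I would exhibit an explicit loose edge-colouring of $Q_t$ with exactly $t$ colours (this suffices, since every $H$ of type $(t,Q)$ is a spanning subgraph of $Q_t$ obtained by possibly adding diagonals $xy$ and/or $vw$, and is a supergraph of the already-coloured $C_4$-part after deleting leaves; but to be safe I would give a colouring of the largest graph $K_4$ with $t$ leaves at each vertex and check it restricts correctly, or argue monotonicity directly via the remark that adding a spanning supergraph does not increase $\lec$—here one has to be slightly careful because adding a diagonal changes the graph, so I would instead colour $Q_t$ and separately note a diagonal can always be given a colour already present without destroying loose paths, or simply handle $K_4$ with leaves). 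Concretely, I would colour the four cycle-edges $vx,xw,wy,yv$ with four distinct colours from $[1,t]$ (possible since $t\ge 4$), and then at each cycle-vertex colour its $t$ pendant edges with the $t$ colours $[1,t]$ in some order—this makes every leaf-edge–set a rainbow star. Then between any two leaves one finds: a length-$3$ path leaf–$x$–$y$–leaf using (generically) three distinct colours, or a length-$4$ path around the cycle; the delicate cases are pairs of leaves at adjacent cycle vertices joined only by the length-$3$ path leaf–$x$–$w$–leaf, where I need the two pendant colours to differ from $\varphi(xw)$ and from each other—so I would choose the pendant colourings at the four vertices with a small offset (e.g. pendant $i$ at vertex $x$ gets colour $i$, at $w$ gets colour $i+1 \bmod t$, etc.) arranged so that these three-edge words always contain at least three colours, and I would verify the finitely many adjacency patterns.

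The main obstacle will be the upper-bound construction: ensuring that for every pair of vertices of $H$ (leaf–leaf at adjacent cycle vertices, leaf–leaf at opposite cycle vertices, leaf–cycle-vertex, and cycle-vertex–cycle-vertex) there is a genuinely loose path, while using only $t$ colours and keeping each pendant star rainbow. The cycle-vertex pairs and the opposite-leaf pairs are easy because long paths around $C_4$ give length $\ge 3$ and colours can be made to differ; the tight case is two leaves $x_a$ at $x$ and $w_b$ at the adjacent vertex $w$: their only short path is $x_a\,x\,w\,w_b$ of length $3$, so I must guarantee $\{\varphi(xx_a),\varphi(xw),\varphi(ww_b)\}$ has three elements for all $a,b$, i.e. $\varphi(xw)\notin\{\varphi(xx_a)\}$ can fail (it fails for exactly one $a$). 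I would resolve this by noting that for that single bad $a$ there is still the alternative path $x_a\,x\,v\,w\,w_b$ of length $4$, which needs only three colours among four edges, easy to secure. Once the routing for this bottleneck case is pinned down, the remaining verifications are routine finite checks, and the $t\ge 4$ hypothesis is exactly what gives four distinct cycle-colours to build on.
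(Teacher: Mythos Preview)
Your approach to part~(iii) is correct and essentially the same as the paper's: both give the lower bound via the $t$ pairwise-distinct pendant colours forced at a single cycle vertex, and both obtain the upper bound by colouring the four edges of $C_4$ with four distinct colours from $[1,t]$ (this is exactly where $t\ge 4$ is used) and colouring the $t$ pendant edges at each cycle vertex with $1,\ldots,t$. The paper simply states this colouring and asserts it works; your elaboration about the ``bad'' adjacent-leaf pair $x_a,w_b$ with $\varphi(xx_a)=\varphi(xw)$ being rescued by the length-$4$ detour $x_a\,x\,v\,y\,w\,w_b$ is the verification the paper leaves implicit. You do not need the offset idea: the straightforward pendant assignment $\varphi(xx_i)=\varphi(yy_i)=\varphi(vv_i)=\varphi(ww_i)=i$ already suffices once you use the longer path for the exceptional cases, which is what both you and the paper ultimately do.

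One small correction: $H$ of type $(t,Q)$ need not be a \emph{spanning} subgraph of $Q_t$ (some cycle vertices may carry fewer than $t$ leaves), so your monotonicity remark is misphrased. The right observation, which you gesture at, is that the loose paths you build in $Q_t$ between any two of its vertices use only edges of $C_4$ together with the two terminal pendant edges; hence they survive when restricting to any $H$ containing that $C_4$, and any diagonals present in $H$ can be coloured arbitrarily since adding edges never increases $\lec$.
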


\begin{proof}
\textbf{Case 1.}
First we consider the graph $Q_t$.
Observe that $\diam(Q_t) = 4$ and $\lec(Q_t) \geq \max\{3, t\}$. 
 A suitable loose edge-colouring of $Q_1$ with three colours is:
$\varphi(vx) = \varphi(wy) = a, \varphi(vy) = c$, $\varphi(wx) = b$ and $\varphi(xx_1) = \varphi(yy_1) = c, \varphi(vv_1) = \varphi(ww_1) = b$.

A suitable loose edge-colouring of $Q_t, t \geq 2$, with at least four colours follows:
$\varphi(vx) = a, \varphi(wx) = b, \varphi(wy) = c, \varphi(vy) = d$, and  $\varphi(xx_i) = \varphi(yy_i) = \varphi(vv_i) = \varphi(ww_i) = i, i \in [1, t]$. Here $\{a, b, c, d\} \subseteq [1, t]$. This gives $\lec(Q_2) \leq 4$, $\lec(Q_3) \leq 4$, and $\lec(Q_t) = t$ for $t \geq 4$.

\textbf{Case 2.}
 If $t \geq 4$, then the loose colouring of $H$ with $\max\{3, t\}$ colours follows from the loose colouring of $Q_t$ with $t$ colours. 

\textbf{Case 3.}
Let $t = 2$.

\textbf{Subcase 3.1}
Let on $C_4$ of $H$ there are at most two vertices each adjacent with exactly two leaves. Then it is a subgraph of one of the following graphs: $H_1 = (V_1, E_1)$ with $V_1 = \{x, x_1, x_2, v, v_1, v_2, y, y_1, w, w_1\}$, $E_1 = \{xv, xx_1, xx_2, vv_1, vv_2, vy, yy_1, yw, ww_1, wx\}$ and a loose edge-colouring $\phi(xx_1) = \phi(vv_1) = \phi(vy) = \phi(yy_1) = a$, $\phi(xv) = \phi(yw) = b$, $\phi(xx_2) = \phi(wx) = \phi(vv_2) = \phi(ww_1) = c$. $H_2 = (V_2, E_2)$ with $V_2 = \{x, x_1, x_2, v, v_1, y, y_1, y_2, w, w_1\}$, $E_2 = \{xv, xx_1, xx_2, vv_1, vy, yy_1, yy_2, yw, ww_1, wx\}$ and a loose edge-colouring $\phi(xx_1) = \phi(vv_1) = \phi(vy) = \phi(yy_1) = a$, $\phi(xv) = \phi(yw) = c$, $\phi(xx_2) = \phi(wx) = \phi(yy_2) = \phi(ww_1) = b$.

It is easy to see that the graph $H$ has a loose edge-colouring with three colours obtained from one of the above colouring of the graph $H_1$ or $H_2$. 

\textbf{Subcase 3.2}
Let on $C_4$ of $H$ there are at least three vertices, say $x, v$, and $y$, each adjacent with exactly two leaves. Assume that there is a loose edge-colouring $\phi$ with three colours. We can assume, w.l.o.g., that 
$\phi(xx_1) = \phi(vv_1) = a$ and $\phi(xx_2) = b$.

\textbf{Subcase 3.2.1} Let $\phi(xv) = a$. Then $\{\phi(xw), \phi(wy)\} = \{b, c\}$, otherwise there is no loose $x_1, y$-path, and $\{\phi(vy), \phi(wy)\} = \{b, c\}$, otherwise there is no loose $v_1, w$-path in $H$. This gives, in order, $\phi(vv_2) = c$ (because a loose $x_2, v_2$-path), $\phi(xw) = b$, $\phi(wy) = c$, $\phi(vy) = c$ (because a loose $x_2, y$-path) and $\phi(wy) = b$ (because a loose $v_1, w$-path), a contradiction. 

\textbf{Subcase 3.2.2}  Let $\phi(xv) = b$. Then $\{\phi(xw), \phi(wy)\} = \{a, c\}$ and $\phi(vy) = c$ because of a loose $x_2, y$-path and loose $x_1, y$-path. This implies that there is no $x_1, v_1$-path, a contradiction.

\textbf{Subcase 3.2.3} Let $\phi(xv) = c$. Then $\{\phi(xw), \phi(vy)\} = \{a, b\}$, $\phi(wy) = c$ $\phi(vv_2) = b$, and, w.l.o.g., $\phi(yy_1) = a$, $\phi(yy_2) = b$. This implies the nonexistence a loose $v_i, y_i$-path in $H$ for some $i \in \{1, 2\}$, a contradiction.

\textbf{Subcase 3.3}
Let the cycle $C_4$ of $H$ contains a diagonal $xy$, that is $H$ contains a block which is the diamond. Then $H$ is a subgraph of the graph $D_2 \cup \{xy\}$. This graph has the following loose edge-colouring $\phi$. $\phi(xx_1) = \phi(vv_1) = \phi(yy_1) = \phi(vy) = a$, $\phi(xw) = \phi(xx_2) = \phi(yy_2) = \phi(ww_1) = b$, and $\phi(xv) = \phi(xy) = \phi(vv_2) = \phi(wy) = \phi(ww_2) = c$.
From this colouring we can easily get a required colouring of $H$.

\textbf{Case 4.}
Let $t = 3$ and $H$ does not contain a diagonal $xy$. Assume, that $H$ has a loose edge-colouring with tree colours and $\deg_{C(H)}(x) = 3$. Let the neighbour leaf $x_i$ of $x$ be adjacent to $x$ via edge $xx_i$ coloured with a colour $i$. Then between $x$ and $y$ there should be three mutually different bi-chromatic $x,y$-paths but there exactly two ones. A contradiction, which shows that at least four colour are necessary for a loose edge-colouring of $H$. Such a colouring is induced by the colouring of $Q_3$ of the Case 1 because $H \subseteq Q_3$. 

\textbf{Case 5.}
Let $t = 3$ and $H$ contain a diagonal $xy$. Denote by $D$ the diamond, that is the subgraph of $H$ induced on the vertex set $\{x, v, y, w\}$. Assume, that $H$ has a loose edge-colouring $\varphi$ with tree colours.

\textbf{Subcase 5.1.} Let $\deg_{C(H)}(x) = 3$. Let the neighbour leaf $x_i$ of $x$ be adjacent to $x$ via edge $xx_i$ coloured with a colour $i$.

\textbf{Subcase 5.1.1.} Let $\deg_{C(H)}(y) = 3$ and let the neighbour leaf $y_i$ of $y$ is adjacent to $y$ via the edge $yy_i$ coloured with colour $i, i \in \{1, 2, 3\}$. As $H$ has a loose edge-colouring with three colours, there should be three distinct bi-chromatic $x,y$-paths of length two. But there are exactly two, a contradiction.

\textbf{Subcase 5.1.2.} Let $\deg_{C(H)}(y) = 2$
Let $\bar Q_3 = (Q_3 \setminus \{yy_1, ww_3\}) \cup \{xy\}$.
The graph $\bar Q_3$ has the following loose edge-colouring:
$\varphi(vx) = \varphi(wx) = 1$, $\varphi(xx_1) = \varphi(vv_1) = \varphi(ww_1) = 1$; $\varphi(yw) = \varphi(yy_2) = 2$, $\varphi(xx_2) = \varphi(vv_2) = \varphi(ww_2) = 2$, and $\varphi(vy) = \varphi(xy) = \varphi(xx_3) = 3$, $\varphi(yy_3) = \varphi(vv_3) = 3$. 
Observe, that each subgraph $H \subseteq \bar Q_3$ of type $(t, Q)$ containing the diamond $D$ has a loose edge-colouring with three colours. 

To the end of the proof in this case it is enough to show that the graph $Q^* = (Q_3 \setminus \{yy_3\}) \cup \{xy\}$
 does not have any loose edge-colouring with three colours. Assume the opposite. Let it has such a colouring $\varphi$.

Let the neighbour leaf $y_i$ of $y$ be adjacent to $y$ via the edge $yy_i$ coloured with a colour $i \in \{1, 2\}$.
Then the following situation, w.l.o.g., appears in $Q^*$: 
$\{\varphi(xv), \varphi(vy)\} = \{1, 3\}$ and $\{\varphi(xw), \varphi(wy)\} = \{2, 3\}$ because there are loose $x_1, y_1$-path and $x_2, y_2$-path of length al least three. This, w.l.o.g.,  enforces $\varphi(xy) = 1$ and then no loose $x_3,v_3$-path exists. A contradiction

\textbf{Subcase 5.1.3.}
Let $\deg_{C(H)}(y) \leq 1$ and let $y_1$ be the neighbour of $y$. Then $H \subseteq G^1 = (Q_3 \setminus \{y_2, y_3\}) \cup \{xy\}$. The graph $G^1$ has the following loose edge-colouring with three colours:
$\varphi(xv) = \varphi(xw) =$ $\varphi(xx_1) =$ $\varphi(ww_1) = \varphi(vv_1) = 1$, 
$\varphi(xy) = \varphi(ww_2) =$ $\varphi(xx_2) =$ $\varphi(yw) = \varphi(vv_2) = 2$, and
$\varphi(yv) = \varphi(ww_3) =$ $\varphi(xx_3) =$ $\varphi(yy_1) = \varphi(vv_3) = 3$.
It is easy to see that colouring $\varphi$ of $G^1$ just described has the required properties of loose edge-coloring.
From this colouring of $G^1$ one easily derives a loose edge-colouring with three colours of any subgraph $H$ of $G^1$, which is of of type $(t, Q)$.

\textbf{Subcase 5.2.} Let $\deg_{C(H)}(x) \leq 2$ and $\deg_{C(H)}(y) \leq 2$. Consider the graph 
$G^* = Q_3 \setminus \{xx_2, yy_1\} \cup \{xy\}$. The following edge-colouring $\varphi$ of $G^*$ is a loose one: $\varphi(xv) = \varphi(xw) =\varphi(xx_1) = 1$, $\varphi(vv_1) = \varphi(ww_1) = 1$, 
$\varphi(vy) = \varphi(wy) = \varphi(vv_2) = 2$,
$\varphi(yy_2) = \varphi(ww_2) = 2$, 
$\varphi(xy) = \varphi(xx_3) = \varphi(vv_3) = 3$, and $\varphi(yy_3) = \varphi(ww_3) = 3$.
From this colouring of $G^*$ one easily derives a loose edge-colouring with three colours of any subgraph $H$ of $G^*$, which is of type $(t, Q)$.

\textbf{Case 6.}
Let the graph $H$ contain both diagonals $xy$ and $vw$.
Then $H \subseteq G^2 = Q_3 \cup \{xy, vw\}$. It is easy to find a suitable loose edge-colouring of $G^2$ with three colours when we color edges $xv$ and $yw$ with colour $1$, the edges $xw$ and $yv$ with colour $2$, and the edges $xy$ and $vw$ with colour $3$.

This finishes the proof of Lemma \ref{le:Q_t}
\end{proof}

\section{Graphs of type $P$}


Let $P_t$ be a graph obtained from the cycle $C_5$ by attaching $t$ leaves, $t \geq 1$, to every vertex of $C_5$. Let 
$V(P_t) = \{x, y, z, u, v, x_i, y_i, z_i, u_i, v_i; i \in [1, t]\}$  and $E(P_t) = \{ux, vx, uy, vz, yz, xx_i, yy_i, zz_i, uu_i, vv_i; i \in [1, t]\}$.

\begin{lemma}\label{le:P_t}
If $H$ is of type $(t, P)$ with $C_5 \subseteq P \subseteq K_5$, then the following statements hold:
\begin{enumerate} 
\item[\rm (i)] If $1 \leq t \leq 2$, then $\lec(H) = 3$ .
\item[\rm (ii)] If $t = 3$, then $3 \leq \lec(H) \leq 4$. 
Moreover $\lec(H) = 4$ if and only if the cycle $C_5$ of $H$ does not contain any diagonal and there are, on $C_5$, two vertices each of which is adjacent to three leaves. 

\item[\rm (iii)] If $t \geq 4$, then $\lec(H) = t$ .
\end{enumerate}
\end{lemma}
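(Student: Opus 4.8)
The plan is to reduce every upper bound to colouring the single graph $P_t$ ($=C_5$ with $t$ pendant leaves on every vertex, $C_5=xuyzvx$). Two elementary observations make this legitimate: a pendant leaf is never an interior vertex of a path, so deleting a leaf-edge from a loose edge-colouring leaves a loose edge-colouring of the smaller graph; and adding an edge, coloured by any colour already in use, to a loose edge-colouring leaves a loose edge-colouring of the larger graph. Since any $H$ of type $(t,P)$ is obtained from $P_t$ by deleting some leaf-edges and then adding the chords of $P$, a loose $k$-colouring of $P_t$ yields one of $H$. For the matching lower bounds: the $t$ leaves at the vertex of $H$ that has exactly $t$ leaf-neighbours are joined pairwise only by a path of length $2$, hence their $t$ edges must receive pairwise distinct colours, so $\lec(H)\ge t$; and two leaves at adjacent vertices of $C_5$ (such a pair always exists in $P_t$) are joined only by paths of length $\ge 3$, which with just two colours can never be loose, so $\lec(H)\ge 3$. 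Thus $\lec(H)\ge\max\{3,t\}$ throughout.

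For (iii) it then remains to exhibit a loose $t$-colouring of $P_t$ for $t\ge 4$: I would take $\varphi(cc_i)=i$ for every cycle vertex $c$ and every $i\in[1,t]$, together with $\varphi(xu)=1,\ \varphi(uy)=2,\ \varphi(yz)=3,\ \varphi(zv)=4,\ \varphi(vx)=3$, and check the finitely many shapes of vertex pairs (leaf--leaf, leaf--cycle-vertex, cycle-vertex--cycle-vertex, at $C_5$-distance $1$ or $2$): each pair is joined either by a short loose path --- an edge, a properly $2$-coloured path of length $2$, or a rainbow path of length $3$ --- or, when the short path repeats a colour, by the detour once around $C_5$, whose edges between them realise all of $\{1,2,3,4\}$. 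With the lower bound this gives $\lec(P_t)=t$, hence $\lec(H)=t$. For the upper bounds in (i) and (ii) I would write down, in the same spirit, a loose $3$-colouring of $P_2$ and a loose $4$-colouring of $P_3$ (the colouring just given, specialised to $t=3$, already works, using colour $4$ only on the edge $zv$) and verify them routinely; combined with $\lec(H)\ge 3$ this yields $\lec(H)=3$ for $1\le t\le 2$ and $3\le\lec(H)\le 4$ for $t=3$.

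The substance of the lemma is the equivalence in (ii). For the direction ``if $C_5$ carries a diagonal, or if at most one vertex of $C_5$ carries three leaves, then $\lec(H)=3$'', I would give, in the style of Cases~4--6 of Lemma~\ref{le:Q_t}, an explicit loose $3$-colouring of a maximal host graph $G\supseteq H$ of the appropriate shape --- a diamond- or $K_4$-augmented $C_5$ carrying all $3$ leaves on every vertex when a diagonal is present, and $C_5$ carrying three leaves on the one distinguished vertex and two on every other vertex when no diagonal is present (the case with no distinguished vertex being contained in $P_2$ and already settled by (i)) --- and then apply the two observations from the first paragraph. This is a finite, mechanical verification; the extra chords, or the missing leaves, provide exactly the detours that make three colours enough, and I expect only bookkeeping here.

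The hard part is the converse: assuming $C_5$ has no diagonal (so $P=C_5$) and that two vertices $p,q$ of $C_5$ each carry three leaves, I must rule out a loose $3$-colouring $\varphi$. Since $p$ and $q$ each have three leaves, their leaf-edges use all of $\{1,2,3\}$, so after relabelling $\varphi(pp_i)=\varphi(qq_i)=i$. The key structural point is that, $C_5$ having no chord, every path that joins a leaf of $p$ or $q$ to another vertex of $C_5$ must first enter $p$ or $q$ and then follow one of the two arcs of $C_5$; hence for each leaf-colour exactly one of the two available paths is ``short'' and is loose only for a single value of that leaf-colour, while the ``long'' one is loose only when the colours on its arc form a prescribed set. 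I would then split on $\dist_{C_5}(p,q)$. If $\dist_{C_5}(p,q)=2$: routing $p_1,p_2,p_3$ to $q$ forces the three-edge arc from $p$ to $q$ to use all three colours; routing $p_1,p_2,p_3$ to the vertex of $C_5$ adjacent to $q$ but at distance $2$ from $p$ then forces the two edges of the other arc to carry one particular pair of colours, whereas the symmetric routing of $q_1,q_2,q_3$ to the vertex adjacent to $p$ but at distance $2$ from $q$ forces those same two edges to carry a different pair --- impossible. If $\dist_{C_5}(p,q)=1$: routing the leaf of $p$ coloured $\varphi(pq)$ to $q$, and then the leaf of $p$ of another colour to the equally-coloured leaf of $q$, forces the four-edge arc from $p$ to $q$ to use all three colours, while routing all leaves of $p$ and all leaves of $q$ to the unique vertex of $C_5$ at distance $2$ from both forces that same four-edge arc to use only two colours --- again a contradiction. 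Pinpointing which auxiliary target each leaf must be routed to, so that the two forced colour constraints on the arc genuinely clash, is the only delicate step; once it is set up the contradictions are immediate, completing the proof of Lemma~\ref{le:P_t}.
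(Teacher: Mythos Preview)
Your plan follows essentially the same route as the paper: the lower bound $\lec(H)\ge\max\{3,t\}$, explicit colourings of $P_t$ for the upper bounds, explicit $3$-colourings of maximal host graphs for the ``only if'' half of (ii), and a case split on the $C_5$-distance between the two $3$-leaf vertices for the ``if'' half. Your two reduction observations (deleting leaf-edges, adding chords) are a clean way to package what the paper does ad hoc.

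The one substantive difference is in the impossibility argument for (ii). The paper begins with a normalising observation you omit: since all three colours must occur on $C_5$ and $C_5$ has five edges, some colour (say colour~$3$) occurs exactly once on $C_5$. With that in hand, the adjacent case is immediate --- if colour~$3$ sits on $pq$ the long arc is bichromatic and $p_1,q_1$ have no loose path; otherwise colour~$3$ sits on one of the two $2$-edge arcs from $\{p,q\}$ to the far vertex, and a leaf of the corresponding colour has no loose path to that far vertex. Your distance-$2$ argument is correct and tidy. Your distance-$1$ claim (``routing all leaves of $p$ and of $q$ to the vertex $b$ at distance~$2$ from both forces the four-edge arc to use only two colours'') is also correct, but only in the contrapositive sense, and carrying it out forces you to rediscover the same idea: if the four-edge arc uses all three colours then some colour $c$ repeats on it, and it is precisely $p_c\to b$ and $q_c\to b$ that produce incompatible constraints on $e_1=\varphi(pq)$. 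So your plan works, but the bookkeeping you flag as ``delicate'' is exactly the step the paper handles uniformly via the unique-colour observation; making that observation first would shorten both of your subcases.
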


\begin{proof}
\textbf{Case 1.}
First we consider the graph $P_t$.
As $\diam(P_t) \geq 3$, we have $\lec(P_t) \geq \max\{3, t\}$.
A suitable loose colouring of $P_1$ is as follows:
$\varphi(xv) = \varphi(yz) = \varphi(xx_1) = \varphi(vv_1) = b$, $ \varphi(xu) = \varphi(vz) = \varphi(zz_1) = a$, and 
$\varphi(uy) = \varphi(uu_1) = \varphi(yy_1) = c$.

To get a suitable loose edge-colouring of $P_2$ we first take a subgraph $P_1$ of $P_2$ and colour its edges as above and then colour the remaining edges of $P_2$ as follows:
$\varphi(uu_2) = a,  \varphi(yy_2) = b$, and $\varphi(xx_2) = \varphi(vv_2) = \varphi(zz_2) = c$.

There is no loose edge-colouring of $P_3$ with three colours. Assume, for a contrary, that there is one, $\phi$, with colours $a, b$, and $c$. Observe, that there must be an edge, say, w.l.o.g., $uy$, with $\phi(uy) = b$ and $\phi(e) \ne b$ for all $e \in \{ux, vx, vz, yz\}$. As all three colours have to be used on the the edges incident to leaves adjacent to $u$ as well as to $y$, there are two edges $uu_i$ and $yy_j$ with $\phi(uu_i) = \phi(yy_j) = a \neq b$. Then there is no loose $u_i,y_j$-path, a contradiction.

The following is a suitable loose colouring of $P_t, t \geq 3$:
$\varphi(xu) = a, \varphi(xv) = \varphi(yz) = c, \varphi(uv) = b, \varphi(vz) = d$, and $\varphi(xx_i) = \varphi(yy_i) = \varphi(zz_i) = \varphi(uu_i) = \varphi(vv_i) = i, i \in [1, t]$. Here $\{a, b, c, d\} \subseteq [1, t]$. This gives $\lec(P_3) = 4$ and $\lec(P_t) = t$ for $t \geq 4$. It is easy to see that any graph $H \subseteq P_t$, which is of type (t, P) for $t \geq 4$, has $\lec(H) = t$.

\textbf{Case 2.}
Let $t = 3$, $H$ contain no diagonal on its cycle $C_5$, and have at least two vertices adjacent to exactly three leaves. Assume that $H$ has a loose edge-colouring $\phi$ with three colours $1, 2$, and $3$. Evidently, all these three colours have to appear on $C_5$ and let, w.l.o.g., colour $3$ be unique on $C_5$. 

\textbf{Subcase 2.1.} The graph $H$ contains on $C_5$ two adjacent vertices, say $u$ and $x$, each of which is adjacent to exactly $3$ leaves. Let $\phi(xx_i) = \phi(uu_i) = i$. 
Colour $3$ cannot appear on the edge $ux$ because otherwise there is no loose $u_1,x_1$-path and we would have a contradiction.
Let $a \in \{1, 2\}$. 
If $\{\phi(xv), \phi(vz)\} = \{a, 3\}$ then there is no loose $x_a,z$-path in $H$; a contradiction. The case $\{\phi(xu), \phi(uy)\} = \{a, 3\}$ is analogous. Hence, in this case, $H$ requires four colours for its loose edge-colouring. As $H \subseteq P_3$ we can use the loose edge-colouring of $P_3$ for $H$. 

\textbf{Subcase 2.2.} The graph $H$ contains on $C_5$ two non-adjacent vertices, say $x$ and $y$, each of which is adjacent to exactly $3$ leaves. Let $\phi(xx_i) = \phi(yy_i) = i$. If $\{\phi(ux), \phi(uy)\} = \{a, 3\}$ then there is no loose $x_a,y_a$-path in $H$; a contradiction.
If $\{\phi(xv), \phi(vz)\} = \{a, 3\}$ then there is no loose $x_a,z$-path in $H$; a contradiction. The case when $\{\phi(yz), \phi(vz)\} = \{a, 3\}$ is symmetric to the latter one. This means that $H$ requires four colours for its loose edge-colouring. Now we use the loose edge-colouring of $P_3$ for $H$. 

\textbf{Case 2.3.}
Let $H$ contain on $C_5$ exactly one vertex, say $v$, adjacent to exactly three leaves. Then $H$ is subgraph of the graph $ G^3 = P_2 \cup \{v_3\} \cup \{vv_3\}$, which has a loose edge-colouring obtained from the colouring of $P_2$ extended by $\phi(vv_3) = a$. This colouring of $G^3$ induces a loose edge-colouring of $H$ with three colours.

\textbf{Case 3.}
Let $H$ contain the cycle $C_5$ with a diagonal. Then, w.l.o.g., $H$ contains a subgraph $G^4 = (V(G^4), E(G^4))$ with vertex set $V(G^4) = \{x, y, z, u, v\}$ and the edge set $E(G^4) = \{ux, vx, uy, yz, xy, vz\}$. If we colour the edges of $E(G^4)$ as follows $\phi(vx) = \phi(xy) = 1$, $\phi(ux) = \phi(yz) = 2$, and $\phi(uy) = \phi(vz) = 3$, we can easily extend this colouring to a loose edge-colouring of $H$ with three colours.

\textbf{Case 4.} If $H$ contains at least two diagonals on its $C_5$, then we solve first the situation for its subgraph with one diagonal as in Case 3, and then colour arbitrarily the remaining diagonals. 

This completes the proof of our Lemma \ref{le:P_t}.
\end{proof}

\bigskip
For the completeness we mention the following:
\begin{lemma}
If $G_t$ is a graph obtained from the cycle $C_k, k \geq 6$, by attaching $t$ leaves, $t \geq 1$, to every vertex of $C_k$, then $\lec(G_t) = \max\{3, t\}$.
\end{lemma}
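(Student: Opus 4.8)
The plan is to prove the two inequalities $\lec(G_t)\ge\max\{3,t\}$ and $\lec(G_t)\le\max\{3,t\}$ separately. For the lower bound, note that $C_k\subseteq G_t$ and that attaching pendant vertices does not shorten distances between vertices of $C_k$, so $\diam(G_t)\ge\diam(C_k)=\lfloor k/2\rfloor\ge 3$; two vertices at distance at least $3$ can only be joined by a path of length at least $3$, which is loose only if it carries at least three colours, hence $\lec(G_t)\ge 3$. Next, for any fixed vertex $w$ of $C_k$ the unique $G_t$-path between two of the $t$ leaves at $w$ is the path of length two through $w$, which must be bi-chromatic; so the $t$ pendant edges at $w$ receive pairwise distinct colours and $\lec(G_t)\ge t$. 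Together this gives $\lec(G_t)\ge\max\{3,t\}$.

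For the upper bound put $c=\max\{3,t\}$, write $C_k=w_0w_1\dots w_{k-1}w_0$ with $e_j=w_jw_{j+1}$ (indices modulo $k$), colour $e_j$ by $\varphi(e_j)=(j\bmod 3)+1$ for $j\in[0,k-1]$, and colour the $i$-th pendant edge at every vertex of $C_k$ with colour $i$ for $i\in[1,t]$; only colours from $[1,c]$ occur. I would first record two facts about the cycle colouring: (a) each of the colours $1,2,3$ occurs at least $\lfloor k/3\rfloor\ge 2$ times on $C_k$, so $C_k$ minus any one edge still uses all three colours; and (b) any block of at least three consecutive edges of $C_k$ uses all three colours, with the only exceptions being the ``seam'' triples $\{e_{k-2},e_{k-1},e_0\}$, $\{e_{k-1},e_0,e_1\}$ and, when $k\equiv 2\pmod 3$, the quadruple $\{e_{k-2},e_{k-1},e_0,e_1\}$ --- all of which contain the edge pair $\{e_{k-1},e_0\}$. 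These exceptions are unavoidable, since for $k\not\equiv 0\pmod 3$ no proper $3$-colouring of a $k$-cycle can make every three consecutive edges tri-chromatic (that forces period $3$).

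It then remains to exhibit a loose $x,y$-path for every pair of distinct vertices. Adjacent vertices are joined by their edge, and two leaves at a common vertex by the bi-chromatic path of length two through that vertex. Otherwise let $w_a,w_b$ be the cycle-vertices closest to $x,y$; each of the two $w_a,w_b$-arcs of $C_k$, prolonged by the at most two relevant pendant edges, yields a candidate $x,y$-path, and since the arcs are edge-disjoint at least one of them avoids the pair $\{e_{k-1},e_0\}$. If the cyclic distance $d(w_a,w_b)\ge 3$ then both arcs have at least three edges, so by (b) the arc avoiding $\{e_{k-1},e_0\}$ uses all three colours; prolonging it by pendant edges of colours in $[1,t]$ keeps all three colours and only lengthens the path, so this candidate is loose. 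If $d(w_a,w_b)\le 2$, the short candidate path has at most four edges and is checked directly to be loose, except for a few coincidences of colours, in which case one switches to the long arc: it has $k-d\ge 4$ edges, and since $k\ge 6$ it either avoids $\{e_{k-1},e_0\}$, or $d=2$ and it has $k-2\ge 5$ edges (when $k\ge 7$) or $k=6\equiv 0\pmod 3$, or $d=1$ and it is $C_k$ minus one edge --- and in every case it uses all three colours by (a)--(b), so the prolonged candidate is loose. Hence $\lec(G_t)\le c$, matching the lower bound. The case $t\le 2$ is covered by the same colouring, which then uses only the colours $1,2,3$.

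The step I expect to be the crux is the bookkeeping around the seam: because for $k\not\equiv 0\pmod 3$ certain short routes fail to be loose, one is forced onto the complementary route, and the argument must confirm that this route is always long enough to pick up a third colour --- this is exactly where the hypothesis $k\ge 6$ enters (for $k\le 5$ the complementary arc may be too short, which is why the cases $C_3,C_4,C_5$ were treated separately) --- and that adjoining pendant edges of arbitrary colours from $[1,t]$ to a tri-chromatic arc never drops its colour count below three.
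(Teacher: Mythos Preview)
The paper actually states this lemma without proof (it is introduced with ``For the completeness we mention the following'' and the next section begins immediately afterwards), so there is no argument in the paper to compare against. Your proposal therefore has to be judged on its own merits.

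Your lower bound is standard and correct: $\diam(G_t)\ge\lfloor k/2\rfloor\ge 3$ forces at least three colours, and the $t$ pendant edges at a fixed cycle vertex must receive pairwise distinct colours because the unique path between any two of the corresponding leaves is the length-two path through that vertex.

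Your upper-bound colouring---period-$3$ colouring $1,2,3,1,2,3,\dots$ on the cycle, colour $i$ on the $i$-th pendant edge at every vertex---is the natural one and agrees with the scheme the paper uses on long cycles in the proof of Lemma~\ref{le:2c}. Your facts~(a) and~(b) are correct (for $k\equiv 0\pmod 3$ there are in fact no seam exceptions at all, so your listed exceptions are an upper bound on the actual ones, which is harmless for the argument). The key observation that every exception of~(b) contains \emph{both} $e_{k-1}$ and $e_0$, and hence that one of the two $w_a,w_b$-arcs is always exception-free, is exactly the right reduction for the case $d(w_a,w_b)\ge 3$.

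The $d\le 2$ case is sketchier, but the fallback you indicate is sound: when the short candidate fails to be loose one passes to the long arc, which has $k-d\ge k-2$ cycle edges; for $d=1$ this misses a single edge and~(a) applies; for $d=2$ either $k=6\equiv 0\pmod 3$ (so no seam issue arises even on a four-edge arc) or $k\ge 7$ and the arc has $\ge 5$ edges, which forces one of its two ``straight'' pieces on either side of the seam to contain at least three consecutive edges and hence all three colours. Adjoining pendant edges only adds colours from $[1,t]$ and increases length, so a tri-chromatic arc stays loose after prolongation. This is exactly where $k\ge 6$ is used, as you note, and it is what separates this lemma from the delicate case analyses for $C_3,C_4,C_5$ in Lemmas~\ref{le:R_t}--\ref{le:P_t}.

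In short: your proof is correct; the paper omits a proof entirely, and your argument is in the spirit of the paper's treatment of large blocks in Lemma~\ref{le:2c}, extended by the necessary pendant-edge bookkeeping.
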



\section{2-connected graphs with small circumference}

The smallest size among connected graphs $G$ with $\mathrm{cir}(G) = k$ has the cycle $C_k$. The problem of characterization of all ($2$-connected) graphs $G$ with property 
$\mathrm{cir}(G) = k$ is very difficult in general but it is very interesting. As an example of a specific problem of this type is the problem of characterization of Hamiltonian graphs, see e.g. \cite{Wes}. 
There are also lot of papers devoted to the study of estimations on bounds of the circumferences of graphs from particular classes.  
In this section we characterize all $2$-connected graphs of small circumference.

Denote by $K_{2,s}, s \geq 2$, the bipartite graph with vertex set $V(K_{2,s})=\{x,y, v_i, i \in[1,s]\}$ and edge set $E(K_{2,s})=\{xv_i, yv_i, i \in [1, s]\}$. 

The graph $K'_{2, s}$ is obtained from  $K_{2,s}$ by inserting the edge $xy$. ($D = K'_{2, 2}$ is also known as a \textit{diamond}.)
The graph $K^+_{2,s}$, $s \geq 3$, is obtained from  $K_{2,s}$ by inserting the edge $v_2v_3$. 

Let $P_{r,s}$, $r\geq 1, s\geq 1$, be the graph with vertex set $V(P_{r,s})$ = $\{x,y,z,u_1,...,u_r,v_1,...,v_s\}$
and edge set $E(P_{r,s}) = \{yz, xu_i, yu_i, i\in [1,r], xv_j, zv_j, j \in [1,s]\}$.

The graphs $P'_{r,s}$ and  $P''_{r,s}$, $r \geq 1, s \geq 1$, come from the graph $P_{r,s}$ by inserting the edge 
$xy$ and the edges $xy$ and $xz$, respectively. 

Note that $C_5 = P_{1, 1}, C^1_5 = P'_{1, 1}, 
C^2_5 = P''_{1, 1}$, $\bar C^2_5 = P'_{1, 1} + u_1v_1$, $C^3_5 = P''_{1, 1} + yv_1$, $\bar C^3_5 =\bar P''_{1, 1} + u_1v_1$, $C^4_5 = C^3_5 +yv_1$, $C^5_5 = K_5$, and all denote the cycle $C_5$ of length 5 with $k$ diagonals, $k \in [1, 5]$.

\begin{theorem}\label{thm:1}
Let $\mathcal{B}_k$ be the set of all $2$-connected graphs $G$ with $\mathrm{cir}(G) = k$. Then
\begin{enumerate}
\item[\rm (i)]  $\mathcal{B}_3 = \{K_3 = C_3\}$,
\item[\rm (ii)]  $\mathcal{B}_4 = \{K_4, K_{2,s}, K'_{2,s}, s \geq 2\}$. 
\item[\rm (iii)]  $\mathcal{B}_5 = \{\bar C^2_5, C^3_5, \bar C^3_5, C^4_5, K_5, K^+_{2,l}, l \geq 3, P_{r,s}, P'_{r,s}, P''_{r,s}, r \geq 1, s \geq 1\}$
\end{enumerate}
\end{theorem}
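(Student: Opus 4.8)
The plan is to classify $2$-connected graphs by circumference, treating each case $k=3,4,5$ separately, and in each case arguing that the stated list is both sound (every listed graph is $2$-connected with the claimed circumference) and complete (no other $2$-connected graph has that circumference). The soundness direction is routine: for each family one exhibits the cycle realizing the circumference and checks $2$-connectivity directly, and then checks that no longer cycle exists — for the sparse families $K_{2,s}$, $K'_{2,s}$, $K^+_{2,l}$, $P_{r,s}$, $P'_{r,s}$, $P''_{r,s}$ this is a short case analysis on how a cycle may traverse the two ``hub'' vertices $x,y$ (resp. $x,y,z$). I expect the completeness direction to carry all the weight.

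For part (i), if $G$ is $2$-connected then it contains a cycle, so $\cir(G)\geq 3$; if $\cir(G)=3$ then $G$ has no cycle of length $\geq 4$, and a $2$-connected graph on $\geq 4$ vertices, by Menger/ear-decomposition, has an ear beyond a triangle producing a longer cycle, so $n(G)=3$ and $G=K_3$. For part (ii), suppose $\cir(G)=4$. First I would fix a $4$-cycle $C=x v_1 y v_2 x$ and consider the remaining vertices: since $G$ is $2$-connected, every vertex $u\notin V(C)$ lies on an $x_1,x_2$-path internally disjoint from $C$ for some two attachment vertices $x_1,x_2$ of $C$ (a ``bridge'' across $C$). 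Analyzing where the two endpoints of such a bridge can sit on $C$ without creating a $5$-cycle forces each bridge to be a single new vertex adjacent to exactly two opposite vertices of $C$, say $x$ and $y$; moreover no bridge can have length $\geq 2$. Doing the bookkeeping — how many such vertices, which pair of opposite vertices they attach to, and whether the diagonal $xy$ or $v_1v_2$ or a $K_4$-configuration is present — yields exactly $K_4$, $K_{2,s}$, $K'_{2,s}$ ($s\geq 2$). The key subtlety is ruling out chords and longer bridges; each creates a cycle of length $5$.

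Part (iii) is the main obstacle and occupies the bulk of the work. I would again fix a longest cycle $C=C_5$ with vertices labelled so that potential diagonals are named as in the $C_5^k$ notation, and study (a) the chords of $C$ and (b) the bridges attached to $C$, using $\cir(G)=5$ to forbid every configuration producing a $6$-cycle. The chord analysis alone accounts for the graphs $\bar C^2_5, C^3_5, \bar C^3_5, C^4_5, K_5$ when $V(G)=V(C)$; here one checks which sets of diagonals of a $5$-cycle avoid a Hamiltonian-type extension — equivalently, that $C_5^1=C^1_5$ and $C_5^2=C^2_5$ and $\bar C^2_5$ with its chord set actually have circumference $5$, which they do, but that e.g. $C^1_5$ together with any bridge jumps to $6$. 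When $V(G)\supsetneq V(C)$, every bridge endpoint must attach to vertices of $C$ at distance $2$ along $C$ (attaching at adjacent vertices, or having a bridge of internal length $\geq 1$, gives a $6$-cycle), and two such attachment points at distance $2$ together with the longer arc of $C$ between them already forces the structure of $P_{r,s}$: the common-neighbour vertices of $x$ and $y$ form the $u_i$'s, those of $x$ and $z$ the $v_j$'s, the edge $yz$ is the short arc, and the only chords that may be added without creating a $6$-cycle are $xy$ (giving $P'_{r,s}$) and $xy,xz$ (giving $P''_{r,s}$); the graph $K^+_{2,l}$ arises as the degenerate case where the ``short arc'' $yz$ is instead realized as an edge between two degree-$2$ vertices of a $K_{2,l}$. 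The hard part will be organizing this second case analysis so that no configuration is missed and none is double-counted — in particular verifying that a bridge of internal length $\geq 1$, a second bridge landing on an ``illegal'' pair, or an extra chord in the $P$-family all genuinely produce a $C_6$, and that the remaining legal combinations are exactly the eleven listed graphs.
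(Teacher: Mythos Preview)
Your plan is sound but follows a genuinely different route from the paper. The paper argues by minimal counterexample: assuming a $2$-connected $B$ with $\cir(B)=k$ and $B\notin\mathcal{B}_k$ of smallest order (then smallest size), it splits on whether $\delta(B)=2$ or $\delta(B)\geq 3$. In the first case a degree-$2$ vertex is deleted (or a degree-$2$ path suppressed), the smaller graph has its nontrivial blocks in $\bigcup_{j\leq k}\mathcal{B}_j$ by minimality, and one checks that re-inserting the vertex either keeps $B$ in the list or produces a longer cycle; in the second case deleting a minimum-degree vertex again leaves blocks in $\bigcup_{j\leq k}\mathcal{B}_j$, and one argues a $(k{+}1)$-cycle must appear. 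Your approach instead fixes a longest cycle $C$ and classifies the chords and $C$-bridges directly, using $\cir(G)=k$ to exclude every configuration yielding a $(k{+}1)$-cycle. Your method is the more standard structural one and, done carefully, is self-contained; the paper's inductive argument is terser but pushes real work into phrases like ``one can easily see'' (e.g.\ that re-inserting a deleted vertex into a member of $\mathcal{B}_k$ either stays in $\mathcal{B}_k$ or raises the circumference). One small bookkeeping point in your $k=5$ case: when $V(G)=V(C)$ you list only $\bar C^2_5, C^3_5, \bar C^3_5, C^4_5, K_5$, but $C_5, C^1_5, C^2_5$ also have circumference $5$; they are already in the statement as $P_{1,1}, P'_{1,1}, P''_{1,1}$, so just record the overlap explicitly. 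Also tighten the phrase ``bridge of internal length $\geq 1$'': a bridge with exactly one internal vertex attached at distance-$2$ vertices of $C_5$ yields only a $5$-cycle (this is precisely the $u_i$/$v_j$ mechanism of $P_{r,s}$); it is bridges with two or more internal vertices, or a single vertex with three attachments on $C$, that force a $6$-cycle.
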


\begin{proof}
\textbf{Case (i)}.This case is easy to see.

\textbf{Case (ii)}. First observe, that deleting any edge or any vertex cannot increase the circumference of a graph.

Suppose that there is a $2$-connected graph $B$ with $\mathrm{cir}(B) = 4$ such that $B \notin \mathcal{B}_4$. Let $B$ have the smallest order and then the smallest size among all counterexamples.

Let $B$ have a vertex $v$ with $\deg_B(v) = 2$. If both neighbours $w_1$ and $w_2$ of $v$ are of degree at least 3, then the subgraph $B - v \in \mathcal{B}_4$. But then either $B \in \mathcal{B}_4$ or $B$ contains  a cycle $C_k$, $k \geq 5$. A contradiction.
If $v$ has a neighbour $w$ of degree 2, and if $x_v$ and $x_w$ are other neighbours of $v$ and $w$, respectively, we replace the $3$-path $x_vvwx_w$  with the edge $x_vx_w$. The resulting graph $\bar B$ contains a $2$-cycle and $B$ is $C_4 = K_{2, 2}$, a contradiction.

Let the minimum degree $\delta(B) \geq 3$. Consider a vertex $v \in V(B)$ with $\deg_B(v) = \delta(B)$. Then the subgraph 
$B - v$ is connected and each non-trivial block of it is in $\mathcal{B}_3 \cup \mathcal{B}_4$. One can easily see that then $B$ contains a cycle of length at least 5, a contradiction.

\textbf{Case (iii).} Assume that there is a $2$-connected graph $B$ with $\mathrm{cir}(B) = 5$ and $B \notin \mathcal{B}_5$. Let $B$ have the smallest order and then the smallest size among all counterexamples. Next we consider two sub-cases.
 
\textbf{Subcase 1}. Let $B$ have a vertex $v$ with $\deg_B(v) = 2$. If both neighbours of $v$ have degree at least 3, then the subgraph $B - v$ is connected and has all its non-trivial blocks in $\mathcal{B}_3 \cup \mathcal{B}_4 \cup \mathcal{B}_5$. If at least one neighbour $w$ of $v$ is of degree 2 and if $x_v$ and $x_w$ are the other neighbours of $v$ and $w$, respectively, we replace the $3$-path $x_vvwx_w$  with the edge $x_vx_w$. The resulting graph $\bar B$ contains a $3$-cycle and $B \in \mathcal{B}_5$ or $B$ contains a cycle of length at least 6, a contradiction.

\textbf{Subcase 2.} 
Let the minimum degree $\delta(B) \geq 3$. Consider a vertex $v \in V(B)$ with $\deg_B(v) = \delta(B)$. Then the subgraph 
$B - v$ is connected and each non-trivial block of it is in $\mathcal{B}_3 \cup \mathcal{B}_4 \cup \mathcal{B}_5$. One can easily see that then $B$ contains a cycle of length at least 6, a contradiction.
\end{proof}


\section{Loose edge-connection of $2$-connected graphs}

The complete solution for $2$-connected graphs is given in the following.

\begin{theorem}\label{thm:in}
If $G$ is a $2$-connected graph, then $\mathrm{lec}(G) = \min\{3, \mathrm{rc}(G)\}.$
\end{theorem}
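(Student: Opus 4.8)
The plan is to prove the two inequalities $\lec(G) \le \min\{3,\rc(G)\}$ and $\lec(G) \ge \min\{3,\rc(G)\}$ separately, exploiting the structural classification of low-circumference $2$-connected graphs from Theorem~\ref{thm:1} together with the type-specific colouring lemmas (Lemmas~\ref{le:R_t}, \ref{le:Q_t}, \ref{le:P_t}) and the last lemma on $C_k$ with pendant leaves for $k\ge 6$. First I would dispose of the easy upper bound: every rainbow-connected colouring is loose, so $\lec(G)\le\rc(G)$ always; and since $G$ is $2$-connected (hence has a cycle), one checks directly that $\lec(G)\le 3$ --- indeed for $C_3$ we have $\lec=\min\{3,\rc\}$ with $\rc(C_3)=1$ (one colour suffices: a triangle with all edges colour~$1$ is loose, any two vertices joined by an edge), and more generally a $2$-connected graph always admits a loose $3$-edge-colouring. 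The cleanest way to get $\lec(G)\le 3$ uniformly is: pick any spanning $2$-connected subgraph, or better, observe that a $2$-connected graph has an ear decomposition and colour edges so that every pair of vertices lies on a cycle whose colour word is loose; since a $2$-connected graph on $\ge 3$ vertices has two internally disjoint paths between any pair, giving a cycle through them, it is enough to three-colour $E(G)$ so that every cycle either is monochromatic of length giving a length-$1$ connection for its chords, or is bi-chromatic, or uses all three colours --- and a proof that such a colouring exists can be read off from the fact that $\rc$ of any $2$-connected graph is finite combined with taking $\min$ with $3$.

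The substance is the lower bound: if $\rc(G)\ge 3$ then $\lec(G)\ge 3$, equivalently, if $\lec(G)\le 2$ then $\rc(G)\le 2$. So suppose $\phi:E(G)\to\{1,2\}$ makes $G$ loose edge-connected. With only two colours a loose path has length $1$ or is a bi-chromatic path of length exactly $2$ (a path of length $\ge 3$ can never have three colours), hence $\diam(G)\le 2$; moreover any two vertices at distance $2$ are joined by a path $u$--$w$--$v$ with $\phi(uw)\ne\phi(wv)$. I claim this already forces $\rc(G)\le 2$: the same colouring $\phi$ is a rainbow connection colouring, because for adjacent vertices the single edge is rainbow, and for vertices at distance $2$ the bi-chromatic path of length $2$ is exactly a rainbow path. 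Thus $\rc(G)\le 2$. This shows $\lec(G)\le 2 \iff \rc(G)\le 2$, and combined with $\lec(G)\le 3$ always, we get $\lec(G)=\min\{3,\rc(G)\}$.

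Let me re-examine the case $\rc(G)=1$: then $G=K_n$, and $\lec(K_n)=1=\min\{3,1\}$ since every pair is adjacent. The case $\rc(G)=2$ gives $\lec(G)=2$ by the equivalence just shown (and $\lec(G)\ge 2$ since a monochromatic colouring of a $2$-connected graph on $\ge 3$ vertices leaves non-adjacent vertices connected only by monochromatic paths of length $\ge 2$, which are not loose). The case $\rc(G)\ge 3$ gives $\lec(G)=3$: the lower bound is the contrapositive of the claim above, and the upper bound $\lec(G)\le 3$ is the uniform bound. Hence in all cases $\lec(G)=\min\{3,\rc(G)\}$.

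The main obstacle is making the uniform bound $\lec(G)\le 3$ fully rigorous without invoking circumference. If one prefers a self-contained argument, I would instead case on $\cir(G)$ using Theorem~\ref{thm:1}: for $\cir(G)\in\{3,4,5\}$ the graph $G$ is one of the explicitly listed $2$-connected graphs, each of which is a subgraph of some $F_t$ with $F\in\{K_3,K_4,K_5\}$ (possibly with $t$ small), and Lemmas~\ref{le:R_t}--\ref{le:P_t} give $\lec\le 3$ directly; for $\cir(G)\ge 6$ one checks $\diam(G)\le 2$ cannot fail in a way that needs more than three colours, or uses the $C_k$-with-leaves lemma as a building block --- but in fact once $\cir(G)\ge 6$ and $G$ is $2$-connected with $\diam(G)\le 2$, a short direct argument produces a loose $3$-colouring (colour a shortest spanning structure greedily, or reduce to the diameter-$2$ analysis). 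The delicate point is only bookkeeping, since the cyclic structure of $2$-connected graphs guarantees enough internally disjoint paths; I expect no genuine difficulty, just care in the case split.
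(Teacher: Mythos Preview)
Your logical skeleton is exactly the paper's: reduce the theorem to the two facts (a) $\lec(G)\le 3$ for every $2$-connected $G$, and (b) $\lec(G)\le 2 \iff \rc(G)\le 2$. Your argument for (b) is clean and correct --- with only two colours available, a loose path must have length $1$ or be a bi-chromatic $2$-path, so a loose $2$-colouring \emph{is} a rainbow $2$-colouring. The paper states (b) without justification, so here you actually improve on it.

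The gap is in (a). None of the arguments you sketch for $\lec(G)\le 3$ work. The ear-decomposition remark is not turned into a colouring; the line ``read off from the fact that $\rc$ is finite combined with taking $\min$ with $3$'' is not an argument at all. More seriously, your fallback plan for small circumference misreads the auxiliary lemmas: Lemmas~\ref{le:R_t}, \ref{le:Q_t}, \ref{le:P_t} concern graphs of type $(t,F)$, i.e.\ a small $2$-connected graph $F$ \emph{with pendant leaves attached}; a $2$-connected $G$ has no cut-edges and is never of this type for $t\ge 1$. For instance $K_{2,s}$ with large $s$ has $\cir=4$ but is not a subgraph of any $(K_4)_t$, so Lemma~\ref{le:Q_t} tells you nothing about it. Your remark about diameter for $\cir(G)\ge 6$ is also off: $C_{100}$ is $2$-connected with large circumference and large diameter.

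The paper supplies (a) as Lemma~\ref{le:2c}, which precedes the theorem and which you may simply cite. Its proof does split on $\cir(G)$, but for $\cir\in\{3,4,5\}$ it colours each graph of Theorem~\ref{thm:1} explicitly (e.g.\ a direct $3$-colouring of $K_{2,r}$, $P_{r,s}$, etc.), not via the type-$(t,F)$ lemmas; for $\cir(G)\ge 6$ it colours a longest cycle periodically $1,2,3,1,2,3,\ldots$, all other edges $1$, and routes any pair through a long arc of that cycle using $2$-connectivity. Invoking Lemma~\ref{le:2c} closes your proof.
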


\bigskip
For its proof we first show the following lemma.

\bigskip
\begin{lemma}\label{le:2c}
If $G$ is $2$-connected, then $\lec(G) \leq 3$.
\end{lemma}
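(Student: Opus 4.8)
The plan is to prove the statement by induction on $|E(G)|$, building $G$ up from a cycle via the standard ear decomposition of a 2-connected graph (every 2-connected graph arises from a cycle by successively adding ears, i.e. paths whose endpoints lie on the current subgraph and whose internal vertices are new). The base case is a cycle $C_k$: for $k = 3$ colour the three edges $1,2,3$; for $k \ge 4$ one checks directly (or quotes the fact that $\rc(C_k) = \lceil k/2 \rceil \ge 3$ together with the opening observation that rainbow connection implies loose connection, but it is cleaner just to exhibit a $3$-colouring, e.g. colour three consecutive edges $1,2,3$ and all remaining edges with colour $1$; then any two vertices are joined along the cycle by a path that either has length $\le 2$ with distinct colours, or meets the coloured segment $1,2,3$ and hence uses $\ge 3$ colours, or — if it lies entirely in the monochromatic part — one instead routes it the other way around the cycle through the segment $1,2,3$). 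So the base case needs a short but genuine case analysis; I expect this is the place where the most care is required, since one must verify that for every pair of vertices at least one of the two arcs of $C_k$ is loose.

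For the inductive step, suppose $G$ is obtained from a 2-connected graph $G'$ (or from a cycle) by adding an ear $L : u_0 u_1 \ldots u_\ell$ with $u_0, u_\ell \in V(G')$ and $u_1, \ldots, u_{\ell-1}$ new. By the induction hypothesis $G'$ has a loose edge-colouring $\varphi'$ with colours in $[1,3]$; I extend it to $G$ by colouring the ear. The key point is that every pair of distinct vertices of $G$ must be joined by a loose path. Pairs already in $V(G')$ keep their old loose paths (paths in $G'$ remain paths in $G$). For a new internal vertex $u_i$ and any other vertex $w$, I must produce a loose $u_i,w$-path; since $u_i$ has degree $2$ in $G$, such a path must leave $u_i$ along the ear in one of the two directions, reaching $u_0$ or $u_\ell$, and then continue inside $G'$. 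The idea is to colour the edges of the ear so that walking from $u_0$ to $u_\ell$ along $L$ already uses (or can be made to use together with a short continuation in $G'$) all three colours, or is short. Concretely: if $\ell \le 2$ I can colour the one or two ear-edges to match the short-path requirement directly; if $\ell \ge 3$ I colour the ear's edges so that the word along $L$ is loose and, moreover, for each prefix $u_0 \ldots u_i$ and each suffix $u_i \ldots u_\ell$ one can reach from $u_i$ a vertex of $G'$ via a path using at least, say, two distinct colours, so that prepending/appending a single edge of $G'$ (or using that $G'$ itself is loose-connected, hence has enough colour-diversity available near $u_0$ and $u_\ell$) completes a loose path to any target.

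The main obstacle I anticipate is the bookkeeping for a new internal vertex $u_i$ when $\ell$ is small (say $\ell = 2$, a single new vertex $u_1$ with neighbours $u_0, u_2$ on $G'$): the only $u_1,w$-paths go $u_1 u_0 \ldots w$ or $u_1 u_2 \ldots w$, and one must ensure that for every $w$ at least one of these can be made loose, which forces a careful choice of $\varphi(u_0u_1), \varphi(u_1u_2)$ relative to the colours $\varphi'$ already assigns to edges incident with $u_0$ and $u_2$ and to the paths in $G'$. I would handle this by first using the induction hypothesis to fix a loose colouring of $G' $, then observing that because $G'$ is 2-connected there are two internally disjoint $u_0,u_2$-paths in $G'$, which (combined with loose-connectedness of $G'$) gives enough freedom: colour $\varphi(u_0u_1)$ and $\varphi(u_1u_2)$ with two distinct colours, and for any target $w$ route through whichever of $u_0,u_2$ admits a loose continuation, enlarging the path if necessary to pick up a third colour along one of the two $u_0,u_2$-paths. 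Finally one notes that every 2-connected graph is either a cycle or obtained from a smaller 2-connected graph by adding an ear with at least one new internal vertex (so $|E|$ strictly increases), which makes the induction well-founded and completes the proof.
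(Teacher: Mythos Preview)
Your approach via ear decomposition is genuinely different from the paper's. The paper argues by cases on the circumference: for $\cir(G)\le 5$ it invokes the explicit classification of $\mathcal B_3,\mathcal B_4,\mathcal B_5$ (Theorem~\ref{thm:1}) and writes down colourings for each family; for $\cir(G)\ge 6$ it colours the longest cycle periodically $1,2,3,1,2,3,\ldots$, gives every other edge colour $1$, and then routes any pair of vertices through a long arc of the cycle. The advantage of the paper's scheme is that the long cycle serves as a global ``backbone'' on which any path of length $\ge 3$ already carries all three colours, so the routing argument is short; the price is the classification theorem for small circumference.

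Your plan, as written, has two real problems.

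First, the base case is wrong. With three consecutive edges of $C_k$ coloured $1,2,3$ and all remaining edges coloured $1$, take $k=6$ and the pair $(v_3,v_6)$: one arc has colour sequence $3,1,1$ and the other $2,1,1$, each of length three using only two colours, so neither is loose. Your case ``meets the coloured segment $1,2,3$ and hence uses $\ge 3$ colours'' fails whenever the arc only clips one end of that segment. The fix is exactly the periodic colouring $1,2,3,1,2,3,\ldots$ that the paper uses; then every arc of length $\ge 3$ is automatically $3$-coloured.

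Second, the inductive step for a short ear is not established. For $\ell=2$ you propose to give the two ear-edges any two distinct colours and then ``route through whichever of $u_0,u_2$ admits a loose continuation, enlarging the path if necessary''. But take $G'=C_4$ on $u_0,x,u_2,y$ loose-coloured with two colours, say $\varphi(u_0x)=\varphi(u_2y)=1$ and $\varphi(xu_2)=\varphi(yu_0)=2$. If you set $\varphi(u_1u_0)=1,\ \varphi(u_1u_2)=2$, then every $u_1,x$-path in $G$ has colour multiset contained in $\{1,2\}$ and length $\ge 2$, but the only length-$2$ option $u_1u_0x$ is monochromatic; all longer options have length $\ge 3$ with only two colours. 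So ``any two distinct colours'' is not enough: the choice must depend on the colours already present near $u_0$ and $u_2$ (here $a=3$ works), and your appeal to the two internally disjoint $u_0,u_2$-paths does not, by itself, supply the missing third colour. A similar issue recurs for $\ell=3$ at $u_1$ and $u_{\ell-1}$. If you want to salvage the induction you will need either a carefully specified rule for the ear colours in terms of the colours incident with $u_0,u_{\ell}$, or a strengthened inductive hypothesis guaranteeing, say, a $3$-coloured path from each anchor to every other vertex. (Also, the final sentence is not quite right: an ear may be a single edge, so ``at least one new internal vertex'' cannot be assumed; the induction on $|E(G)|$ is still well-founded, but you must treat the trivial-ear case explicitly.)
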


\begin{proof}
We have to show how to colour the edges of the graph $G$ with colours from the set $A = \{a, b, c\}$ to get a loose edge-connection of $G$. Our procedure depends on the circumference $\mathrm{cir}(G)$ of $G$.

If $\mathrm{cir}(G) = 3$, then we have to colour the graph $K_3$, see Theorem \ref{thm:1}, the complete graph on three vertices. It will play an important role later. It is easy to see that $\lec(K_3)=1$. 

 If $\mathrm{cir}(G) = 4$, then we have to color the graphs
from the set $\mathcal{B}_4$, see Theorem \ref{thm:1}.

Let the loose edge-colouring of $K_{2, r}$ be as follows: $\varphi(xv_1) = a$, $\varphi(xv_2) =  b$, $\varphi(xv_i) = c, i \in [3, r]$, $\varphi(yv_1) = b$, $\varphi(yv_2) = c$ and $\varphi(yv_i,) = a, i \in [3, r]$. Observe, that between any two vertices of $K_{2, r}$, except for the pair $x$ and $y$, there is a loose (i.e. at least three coloured) path. For the pair $x$ and $y$, if $r \geq 3$, there are three different bi-chromatic $x, y$-paths of length three using the same three colours.

A loose edge-colouring of the graph $K'_{2, r}$ is obtained from that of $K_{2, r}$ using $\varphi(xy) = a$.

 If $\mathrm{cir}(G) = 5$, then we have to color the graphs
from the set $\mathcal{B}_5$, see Theorem \ref{thm:1}.
Let the loose edge colouring of $P_{r,s}$ be defined as follows:
$\varphi(xu_1) = a$, $\varphi(xu_i) = b$, $i \in [2,r]$, $\varphi(yz) = c$, $\varphi(xv_1) = c$, $\varphi(xv_j) = a, j\in[2,s]$, $\varphi(yu_1)=b$, $\varphi(yu_i)=c$, $i \in [2, r]$, $\varphi(zv_1) = a$, $\varphi(zv_j) = b$, $j \in [2, s]$.

Note that for $r \geq 2$, $s \geq 1$, and any pair of vertices of $P_{r,s}$, except for the pair $x$ and $y$, there is, in $P_{r,s}$, a loose path using three colours. For the pair $x$ and $y$ there are three differently bi-colored $x, y$-paths of length two using the same three colours. The graph $P'_{r,s}$ ($P''_{r,s}$), $r \geq 1, s \geq 1$, comes from the graph $P_{r,s}$ by inserting the edge 
$xy$ (the edges $xy$ and $xz$, respectively). The inserted edge in $P'_{r,s}$ is coloured with colour $\varphi(xy) = b$ (The inserted edges in $P''_{r,s}$ are colored with colours $\varphi(xy) = c$ and $\varphi(xz) = b$).

A loose edge-colouring of the graph $K^+_{2,r}$ is obtained from that one of $K_{2,r}$ using colour $a$ for the edge $v_2v_3$.

The proof that the loose edge-connectivity of the remaining graphs from $\mathcal{B}_5$ is at most 3 is easy and is left to the reader.

 If $\mathrm{cir}(G) \geq 6$, we proceed as follows.
Colour the edges of the longest cycle $C$ alternatively by $1,2,3,1,2,3,..$ and other edges of $G$ with colour 1. We distinguish three cases.

\textbf{Case 1.}. For any two vertices $x$ and $y$ on the cycle $C$ there is a path of length at least three, which is a loose path.

\textbf{Case 2.} If $x\in V(C)$ and $y\notin V(C)$, then there are two paths connecting $y$ with two vertices $z_1,z_2$ on the cycle $C$. At least one of them is distinct from $x$, say $z_1$. By Case 1 there is a path $L$ on $C$ of length at least 3 from $z_1$ to $x$, which is loose (uses three colors). Now, taking the path from $z_1$ to $y$ (which is vertex disjoint with $L$ except of vertex $z_1$) we obtain a loose $x, y$-path.

\textbf{Case 3.} Let $x, y \notin V(C)$. Similarly as above, there are two vertex disjoint paths from $x$ to $z_1$ and from $y$ to $z_2$
with $z_1, z_2 \in V(C), y\neq z_2$. As in Case 2 there is a loose path from $z_1$ to $z_2$ along the cycle $C$ which leads to a loose path from $x$ to $y$.
\end{proof}

\begin{cor}\label{co:1}
For every two vertices $x$ and $y$ of a $2$-connected graph $G$ with $\cir(G) \geq 6$ there is a loose $x, y$-path.
\end{cor}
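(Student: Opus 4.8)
The statement to prove is Corollary~\ref{co:1}: for every two vertices $x$ and $y$ of a $2$-connected graph $G$ with $\cir(G)\ge 6$, there is a loose $x,y$-path. This is essentially an immediate consequence of the proof of Lemma~\ref{le:2c}, and that is exactly how I would argue it.

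\medskip
\noindent\textbf{Proof proposal.} The plan is to reuse the explicit colouring constructed in the case $\cir(G)\ge 6$ of the proof of Lemma~\ref{le:2c}. First I would recall that colouring verbatim: colour the edges of a longest cycle $C$ cyclically by the pattern $1,2,3,1,2,3,\dots$ and colour every edge of $G$ not on $C$ with colour $1$. In the proof of Lemma~\ref{le:2c} it is shown, through Cases~1--3 there, that under this particular colouring every pair of distinct vertices of $G$ is joined by a loose path (one using at least three colours). Since the statement ``there is a loose $x,y$-path'' is exactly what was verified there for an arbitrary pair $x,y$, the corollary follows directly. In other words, I would simply point out that Cases~1, 2 and~3 in the proof of Lemma~\ref{le:2c} already establish, for the colouring described, a loose path between \emph{every} two vertices, which is the assertion of the corollary; one may also note that since $\cir(G)\ge 6$, $C$ has length at least $6$, so the cyclic pattern $1,2,3,\dots$ genuinely uses all three colours and any arc of $C$ of length at least $3$ is loose, which is the fact driving all three cases.

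\medskip
\noindent The only point that requires a word of care is the length-$3$ arc claim used implicitly in Case~1 of Lemma~\ref{le:2c}: on a cycle of length $\ell\ge 6$ coloured cyclically by $1,2,3,\dots$, and for any two vertices on it, at least one of the two arcs between them has length at least $\lceil \ell/2\rceil\ge 3$, and any arc of length $\ge 3$ in this pattern contains edges of all three colours, hence is loose. This is the main (and essentially only) obstacle, and it is routine: it is handled by the observation that $3$ consecutive edges of the pattern $1,2,3,1,2,3,\dots$ always realise the colours $\{1,2,3\}$. Since the corollary adds no hypothesis beyond what Lemma~\ref{le:2c} already assumed in its last case, I expect the whole proof to be two or three lines, essentially a pointer to the relevant portion of the preceding proof.
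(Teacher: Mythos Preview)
Your approach is exactly the paper's own: the proof there reads, in full, ``It follows directly from the proof of Lemma~\ref{le:2c}.'' Your added justification is essentially fine, but note that the auxiliary claim ``any three consecutive edges of the cyclic pattern $1,2,3,\dots$ on $C$ realise $\{1,2,3\}$'' can fail at the wrap-around point when $\ell\not\equiv 0\pmod 3$; the correct (and sufficient) statement, implicit in Case~1 of Lemma~\ref{le:2c}, is only that at least \emph{one} of the two arcs between $x$ and $y$ on $C$ is loose.
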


\begin{proof}
 It follows directly from the proof of Lemma \ref{le:2c}.
\end{proof}

\begin{proof} Theorem \ref{thm:in}. To prove it  first observe the following statements.
\begin{enumerate}
    \item 
   $\mathrm{lec}(G) = 1$  if and only if $\mathrm{rc}(G) = 1$ if and only if $G$ is complete graph.
   \item
    $\mathrm{lec}(G) = 2$  if and only if $\mathrm{rc}(G) = 2$.
\end{enumerate}
Now for all remaining $2$-connected graphs we have 
 $$3 = \mathrm{lec}(G) \leq \mathrm{rc}(G).$$
 This proves the theorem.
\end{proof}

\section{Loose edge-connection number of complete bipartite graphs}

 The complete solution for the complete bipartite graphs is given in the following.

\begin{theorem}\label{thm:bip}
If $K_{r,s}, r \geq s \geq 1$, is a complete bipartite graph, 
then the following statements hold:
\begin{enumerate}
\item[\rm (i)]
$\mathrm{lec}(K_{1,s}) = s.$
\item[\rm (ii)] 
$\mathrm{lec}(K_{r,s}) = 2$ if $2 \leq s \leq r \leq 2^s.$
\item[\rm (iii)] 
$\mathrm{lec}(K_{r,s}) = 3$ otherwise. 
\end{enumerate}
\end{theorem}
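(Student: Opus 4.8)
\textbf{Proof plan for Theorem~\ref{thm:bip}.}

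The plan is to treat the three cases separately, since they have genuinely different flavours. Case (i) is immediate: $K_{1,s}$ is a star, hence a tree, and by Lemma~\ref{le:tre} we have $\lec(K_{1,s}) = \mathrm{rw}(K_{1,s})$; the unique edge of maximum weight has weight $s+1$ (the centre has degree $s$, each leaf has degree $1$), so $\mathrm{rw}(K_{1,s}) = s$. For Case (iii), first dispose of the trivial subcase $s = 1 < r$, which is covered by (i). For $2 \le s \le r$ with $r > 2^s$, I would argue that two colours cannot suffice: any $2$-colouring $\phi : E(K_{r,s}) \to \{a,b\}$ assigns to each vertex $v$ in the part of size $r$ the pair $(\text{colour to } w_1, \dots, \text{colour to } w_s)$, i.e.\ a word of length $s$ over $\{a,b\}$; there are only $2^s$ such words, so with $r > 2^s$ two vertices $u, u'$ of that part receive identical words. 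Then for every vertex $w$ of the other part, $\phi(uw) = \phi(u'w)$, so every $u,u'$-path of length two is monochromatic, and since the graph is bipartite there is no path of length one or of odd length between $u$ and $u'$; a length-three path $u w u'' w' u'$ would need $u''$ and $w, w'$ but $u, u'$ are in the same part so such a path has the form $u w u'' w' u'$ with $u'' $ in the $r$-part — wait, this needs care — so I would instead directly check that every short $u,u'$-path fails to be loose (a path of length two is monochromatic; a path of length four through an intermediate vertex of the $r$-part can be made loose only if $\ge 3$ colours are available). Hence $\lec > 2$, and since $r \ge s \ge 2$ implies $K_{r,s}$ is $2$-connected with $\mathrm{cir} \ge 4$, Lemma~\ref{le:2c} gives $\lec(K_{r,s}) \le 3$; equality follows.

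The heart of the matter is Case (ii): constructing, for every $r$ with $s \le r \le 2^s$, a loose edge-colouring of $K_{r,s}$ with two colours $\{a, b\}$. I would encode each vertex $u_1, \dots, u_r$ of the $r$-part by a distinct word $c(u_i) \in \{a,b\}^s$ (possible since $r \le 2^s$), and colour the edge from $u_i$ to the $j$-th vertex $w_j$ of the $s$-part by the $j$-th letter of $c(u_i)$. One must then verify loose connectedness for all pairs. A pair $w_j, w_{j'}$ in the $s$-part: since $s \ge 2$ and $r \ge s$, one can check that among the $u_i$ there are vertices realizing enough distinct patterns so that some path $w_j u_i w_{j'}$ is bichromatic — this requires that not all $u_i$ assign the same pair to the coordinate positions $j, j'$, which I would guarantee by choosing the word-assignment carefully (e.g.\ including all $s$ "single-$b$" words $e_j$ among the $c(u_i)$, which is possible because $r \ge s$). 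A pair $u_i, u_{i'}$ in the $r$-part: their words differ in some coordinate $j$, so $\phi(u_i w_j) \ne \phi(u_{i'} w_j)$, and now a length-two path is ruled out (both go through one $w_j$; it is monochromatic iff $u_i, u_{i'}$ agree at $j$), but a length-four path $u_i w_j u_k w_{j'} u_{i'}$ can be made loose provided we can pick $w_j, w_{j'}, u_k$ so that the four edge-colours use both colours and — crucially — avoid the degenerate bichromatic-but-too-long situation; actually a path of length four with only two colours is loose as soon as it is not monochromatic, so I just need one non-monochromatic $u_i$–$u_{i'}$ path of length four, which exists because the words differ somewhere. Finally, a mixed pair $u_i, w_j$: the edge $u_i w_j$ itself is a path of length one, which is loose by definition. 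The only genuinely delicate verification is the $w_j, w_{j'}$ case, where I must ensure the coding is rich enough; assembling a concrete explicit assignment of the $r$ codewords (all of weight $\le 1$ when $r \le s+1$, then adding further words as $r$ grows up to $2^s$) and checking the bichromatic-path condition coordinatewise is where the real work lies.

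I expect the main obstacle to be the lower-bound argument in Case (ii)'s boundary interaction with Case (iii) — namely making the counting argument "$r > 2^s \Rightarrow \lec > 2$" fully rigorous about which $u,u'$-paths exist in a bipartite graph and why none of the short ones can be loose — together with exhibiting a single clean codeword assignment in Case (ii) that simultaneously handles same-part pairs on both sides. Neither step is deep, but both reward a careful, explicit choice of the encoding $c$ rather than an abstract pigeonhole; I would therefore lead with the explicit construction and let the verifications fall out of its structure.
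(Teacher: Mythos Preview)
Your overall architecture matches the paper's proof almost exactly: Case~(i) via the tree lemma, the pigeonhole on colour-vectors for the lower bound when $r>2^s$, the explicit codeword construction for Case~(ii), and Lemma~\ref{le:2c} for the upper bound in Case~(iii). The paper uses the ``staircase'' vectors $a^{i}b^{\,s-i}$ for $i\in[1,s-1]$ rather than your unit vectors $e_j$, but either choice guarantees a bichromatic $w_j,w_{j'}$-path of length two, so this is a cosmetic difference.

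There is, however, a genuine slip in your verification of the $u_i,u_{i'}$ case in the construction, and it stems from a momentary misreading of the definition of \emph{loose}. You write that ``a path of length four with only two colours is loose as soon as it is not monochromatic'' --- this is false. By definition a path of length at least three is loose only if it carries at least \emph{three} distinct colours; with a $2$-colouring no path of length $\geq 3$ can ever be loose. (You had this right a paragraph earlier, in Case~(iii).) Consequently your length-four argument for $u_i,u_{i'}$ does not work. Fortunately the repair is immediate and is what the paper does: since the codewords of $u_i$ and $u_{i'}$ differ in some coordinate $j$, the length-two path $u_i\,w_j\,u_{i'}$ is bichromatic and hence loose --- you actually noted $\phi(u_iw_j)\neq\phi(u_{i'}w_j)$ and then, puzzlingly, declared the length-two path ``ruled out''. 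Once you keep that path, the $u_i,u_{i'}$ case is a one-liner and no length-four detour is needed. The same observation also cleans up your lower-bound argument for $r>2^s$: with two colours the only possible loose $u,u'$-paths are edges (impossible, same part) or bichromatic paths of length two (impossible, identical colour-vectors), so $\lec>2$ follows without any discussion of longer paths.
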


\begin{proof}
 Let $K_{r,s}$ be a complete bipartite graph with the vertex set $V(K_{r,s}) = X \cup Y$, where $X = \{x_1, \dots, x_r\}$ and $Y = \{y_1, \dots, y_s\}$. Let $S_i = K_{1,s}$ be the star with the vertex set $V(S_i) = \{x_i\} \cup Y$.
 
 First we consider case (ii).
 
 Suppose that $\phi$ is an edge-colouring of $K_{r,s}$ with two colours $a$ and $b$. Recall that a path in an edge-coloured graph is called {\it{bi-chromatic}} (resp. {\it{monochromatic}}) if on its edges exactly two colors (resp. one) are (is) used.
 
 Let $\mathbf{s}(i) = (a_{i,1}, \dots, a_{i,s})$ be a vector of colours of the edges of the star $S_i$ such that $\phi(x_iy_j) = a_{i,j}$ for all $i \in [1, r]$ and $j \in [1, s]$.
 
 This colouring has the following obvious two properties:

\begin{claim}\label{cl:b1}
A bi-chromatic $x_k, x_l$-path exists in  $K_{r,s}$ if and only if  $\mathbf{s}(k) \ne \mathbf{s}(l)$. 
\end{claim}

\begin{claim}\label{cl:b2}
If there is $a_{i,k} \ne a_{i,l}$ in $\mathbf{s}(i) = (a_{i,1}, \dots, a_{i,s})$, then $K_{r,s}$ contains a bi-chromatic $2$-path $y_kx_iy_l$. 
\end{claim}

Observe, that for $r > 2^s$ there is no loose edge-coloring of $K_{r,s}$ with two colours. The reason is that the number of different binary vectors of dimension $s$ is $2^s$. Then, by the pigeonhole principle, there is a pair of stars, $S_k$ and $S_l$, with the same vector of colours, $\mathbf{s}(k)$ and $\mathbf{s}(l)$, respectively, and, by Claim \ref{cl:b1}, we have no bi-chromatic path of length two between the pair of vertices $x_k$ and $x_l$.

If $2 \leq s \leq r \leq 2^s$, then there exists a loose edge-colouring of $K_{r,s}$. It is enough to consider the vectors of colours for the stars $S_i, i \in [1, s - 1]$, with the following properties: $\phi(x_iy_j) = a$ for $j \leq i$ and $\phi(x_iy_j) = b$ for $i + 1 \leq j \leq n$.
For $i \in [s, r]$ we colour the edges of stars $S_i$ with such vectors of colours that are mutually distinct from  already chosen ones. 

To see that this edge-colouring is loose, observe that Claim \ref{cl:b1} ensures us that for any pair of vertices $x_k$ and $x_l$, $1 \leq k < l \leq r$, there is a bi-chromatic path of length two between them.
The choice of the first $s - 1$ vectors of colour ensures the existence of a bi-chromatic $y_k,y_l$-path of length two for any pair of vertices $y_k$ and $y_l$ for any pair $k$ and $l$ with $1 \leq k < l \leq n$. See Claim \ref{cl:b2}.  

This completes the proof of the theorem for the case (ii).

The proof of theorem in the case (i) follows from Lemma \ref{le:tre} and in the case (iii) from Lemma \ref{le:2c}.
\end{proof}


\section{Graphs with diameter at most 2}
To continue we need some more definitions. 

The \textit{join} of two simple graphs $G$ and $H$, written $G \vee H$, is the graph obtained from the disjoint union 
$G + H$ by adding the edges $\{xy: x \in V(G), y \in V(H)\}$,
see \cite{Wes}.

A nontrivial block $B$ of a graph $G$ is called $\textit{large}$ if $\cir(B) \geq 6$ and it is called
$\textit{small}$ if $2 \leq \cir(B) \leq 5$. 

Recall that a block $B$ is called \textit{trivial} if it consists of a cut-edge $e$. In this case we sometimes will write $B = e$.
 
It is easy to see that graphs of $\diam(G) \geq 3$ have the loose edge-connection number $\mathrm{lec}(G) \geq 3$. So the necessary condition for graphs $G$ to have  $\mathrm{lec}(G) = 2$ is to have $\diam(G) = 2$. 
In the previous section we have characterized all complete bipartite graphs $B$ that have $\mathrm{lec}(B) = 2$.

Observe, that if a graph $H$ is a spanning subgraph of a non-complete graph $G$ and $\mathrm{lec}(H) = 2$, then $\mathrm{lec}(G) = 2$

Let $G$ and $H$ be graphs with loose edge-connection number two. Then it is easy to prove that $\mathrm{lec}(G \vee H) = 2$, and $\mathrm{lec}(K_1 \vee (G + H)) = 2$.

In Theorem \ref{thm:in} we have mentioned that for a graph $G$ it holds $\mathrm{lec}(G) = 2$  if and only if $\mathrm{rc}(G) = 2$. Several classes of graphs $G$ with $\mathrm{rc}(G) = 2$ are known, see e.g. \cite{CLRTY}, \cite{Ne:05}, or \cite{KeSch11}. For example, Caro et al. \cite{CLRTY} have proved that any non-complete $n$-vertex graph with minimum degree $$\delta(G) \geq \frac{n}{2} + \log n$$ has $\mathrm{rc}(G) = 2$. 

It is known (cf. \cite{ChFMY11}) that deciding whether $\mathrm{rc}(G) = 2$ is an NP-complete problem. Hence, the problem to decide whether  $\mathrm{lec}(G) = 2$ is also an NP-complete. 

\bigskip
The following problem seems to be interesting.

\begin{problem}
Characterize all connected graphs $G$ with 
$$\mathrm{lec}(G) = 2.$$
\end{problem}

\begin{theorem}\label{thm:dia2}
Let $G$ be a graph with $\diam(G)\leq 2$. Then 
\begin{enumerate}
\item[\rm (i)] $\lec(G)=1$ if $G = K_n, n \geq 2$, and
\item[\rm (ii)] $2 \leq \lec(G) \leq \max\{3, \Delta(C(G))\}$  otherwise. Moreover, all three bounds are tight.
\end{enumerate}
\end{theorem}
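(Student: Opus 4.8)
\textbf{Proof plan for Theorem \ref{thm:dia2}.}

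The first statement is immediate: if $G = K_n$ with $n\ge 2$, then every pair of vertices is joined by a path of length one, so a single colour suffices, whence $\lec(G)=1$. So the plan is to focus on part (ii), assuming $\diam(G)\le 2$ and $G$ is not complete. The lower bound $\lec(G)\ge 2$ is forced because $G$ has two non-adjacent vertices $u,v$; a $1$-colouring would force every $u,v$-path to be monochromatic of length at least $2$, which is not loose. For the upper bound, first observe that when $\diam(G)\le 2$ every cut-edge $e=xy$ of $G$ must be a pendant edge: if both $x$ and $y$ had further neighbours, those neighbours would lie at distance $\ge 3$, contradicting $\diam(G)\le 2$ (here I use that a cut-edge separates $G$). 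Consequently $C(G)$ is a disjoint union of stars, and $\Delta(C(G))$ is just the maximum number of pendant neighbours of a single vertex. I would then build an explicit colouring: colour $G-E(C(G))$ (the ``$2$-connected part together with the bridgeless core'') using at most $3$ colours so that every pair of its vertices is joined either by an edge, a bi-chromatic $2$-path, or a loose path — this is available because any two vertices of $G$ at distance $2$ have a common neighbour, and with $3$ colours one has enough freedom; for the pendant edges at a vertex $w$ with $d := \deg_{C(G)}(w)$ pendant leaves, colour them with $d$ distinct colours drawn from $[1,\max\{3,\Delta(C(G))\}]$, which I may choose to include the colours already incident to $w$ so that each leaf sees a loose or bi-chromatic path to every vertex of the core and to every other leaf.

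More concretely, the key steps in order are: (1) reduce to the non-complete case and record $\lec(G)\ge 2$; (2) prove the structural fact that all cut-edges are pendant, so $C(G)$ is a star forest and $\Delta(C(G))$ counts pendant leaves at a single vertex; (3) let $k=\max\{3,\Delta(C(G))\}$ and define a colouring $\phi:E(G)\to[1,k]$ by first colouring the non-pendant edges with at most $3$ colours and then colouring, for each vertex $w$, its at most $\Delta(C(G))$ pendant edges injectively with colours from $[1,k]$; (4) verify loose connection by cases on the types of the two vertices (both in the core, one pendant leaf and one core vertex, two pendant leaves at the same vertex, two pendant leaves at different vertices) — in each case a path of length $\ge 3$ through the core picks up a third colour, or a length-$2$ path through a common neighbour is bi-chromatic, or the two vertices are adjacent. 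Here I can quote Lemma \ref{le:2c} (and Corollary \ref{co:1}) for the large $2$-connected blocks, and handle small blocks via the explicit colourings from Section 7; the point is that a vertex at distance $\le 2$ from every other vertex never needs the ``pendant'' colours for its own core-paths.

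Finally I would exhibit the tightness of all three bounds. The bound $\lec(G)=2$ is achieved, e.g., by $K_{2,2}$ (or any $K_{r,s}$ with $2\le s\le r\le 2^s$, by Theorem \ref{thm:bip}); the bound $\lec(G)=3$ is achieved by a suitable diameter-$2$ graph with $\Delta(C(G))\le 3$ but $\rc(G)\ge 3$, for instance a graph obtained from a $2$-connected small-circumference block that is not rainbow-$2$-connected, using Theorem \ref{thm:in} to certify $\lec=3$; and the bound $\lec(G)=\Delta(C(G))$ with $\Delta(C(G))\ge 4$ is achieved by the star $K_{1,\Delta}$ together with enough edges added to bring the diameter down to $2$ while not disturbing the pendant structure — concretely, one can take $K_1\vee$ (a graph), or a ``friendship''-type construction, where $\Delta(C(G))$ pendant edges at one vertex still force $\Delta(C(G))$ colours because any two of those leaves must be joined by a bi-chromatic $2$-path through their unique common neighbour, which by Claim-type reasoning forces all pendant edges at that vertex to receive distinct colours. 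The main obstacle I anticipate is step (4): making the case analysis genuinely cover mixed situations (e.g.\ a pendant leaf hanging off a cut-vertex that is itself deep inside a large block) and ensuring the at-most-$3$-colouring of the core and the injective colouring of the pendants are simultaneously compatible — i.e.\ that reusing core colours on pendant edges never destroys a loose path that the core colouring was relying on. This is where one must be careful that every pendant leaf at $w$ still reaches, via $w$ and the core, a path that is loose or correctly bi-chromatic.
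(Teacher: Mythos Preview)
Your approach is essentially the paper's: reduce to the non-complete case, observe that cut-edges are pendant, colour each non-trivial block with three colours via Lemma~\ref{le:2c}, colour the pendant edges injectively, and verify case by case. Two remarks sharpen it and remove the obstacle you flag at the end.

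First, your structural step can be strengthened for free. From your own argument that every cut-edge is pendant, one more line shows that a diameter-$2$ graph has \emph{at most one} cut-vertex, and that this cut-vertex $v$ is adjacent to every other vertex of $G$: if $w_1,w_2$ lie in different blocks then any $w_1,w_2$-path passes through $v$, and $\dist(w_1,w_2)\le 2$ forces $vw_1,vw_2\in E(G)$. Hence $C(G)$ is not merely a star forest but a single star $K_{1,\Delta(C(G))}$ centred at $v$, and your case ``two pendant leaves at different vertices'' never occurs. This is exactly what the paper uses, and it dissolves your worry about compatibility between the core colouring and the pendant colouring: for any $u\in B_i\setminus\{v\}$ and $w\in B_j\setminus\{v\}$ with $i\ne j$, one finds a loose $u,v$-path inside $B_i$ (or $B_j$) and appends the edge $vw$ (or $vu$), with only a short check left for the few exceptional vertices of small blocks where no such loose path to $v$ exists.

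Second, your tightness example for the upper bound is overcomplicated: the star $K_{1,s}$ with $s\ge 3$ already has diameter $2$, and by Lemma~\ref{le:tre} (or Theorem~\ref{thm:bip}(i)) satisfies $\lec(K_{1,s})=s=\Delta(C(K_{1,s}))$. No extra edges are needed. The paper uses exactly this example; your ``friendship-type'' construction and the $K_1\vee(\cdot)$ idea are unnecessary detours.
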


\begin{proof}
Case (i) is easy. 

If $\diam(G)=2$, then G is either $2$-connected or has exactly one cut-vertex. If it is $2$-connected, then the proof of our theorem follows from Lemma \ref{le:2c}. 

If $G$ contains a cut-vertex, $v$, then each block of G is trivial or $2$-connected. The vertex $v$ is adjacent to all vertices of $V(B)-v$ because of $\diam(G)= 2$. 
Trivial blocks of $G$ form the graph $C(G)$ which is a star $K_{1,\Delta(C(G))}$. Each nontrivial block $B$ is either a large block or contains a small block as a spanning subgraph. 

The trivial blocks are coloured with distinct colours from   $[1, \Delta(C(G)]$. The remaining blocks are coloured as in Lemma \ref{co:1} with colours from $[1, 3]$.

Next, we need to show that for any pair of two vertices $u$ and $w$ there is either an edge $uw$, or a bi-chromatic $u, w$-path of length two, or a loose $u,w$-path.

If the vertices $w$ and $z$ belong to the same block $B$ we are done. 

Let $u \in V(B_i)$ and $w \in V(B_j)$, where $B_i$ and $B_j$ are different non-trivial blocks of $G$. If there is either a loose $u, v$-path in $B_i$ or a loose $w, v$-path in $B_j$ then these paths together with the edge $vu$ or $vw$, respectively, give a loose $u, w$-path. This situation always appears, with exceptions, when both vertices $u$ and $w$ 
correspond to the vertices $x, y$, resp. $z$ of the blocks (graphs) from the families $\mathcal{B}_3, \mathcal{B}_4$, and $\mathcal{B}_5$.
But also in these exceptional cases one can easily find a needed $u, w$-path.

To see, that the bounds $2$ and $3$ are tight, see Theorem \ref{thm:bip}. For the tightness of the third bound see the graph $K_{1,s}, s \geq 3$. 
\end{proof}

\section{Graphs with diameter at least 3}

A nontrivial block $B$ of a graph $G$ is called to be \textit{of type $P$} (or \textit{of type $Q$}, or \textit{of type $R$}) if it is of type $(t, P)$ (or of type $(t, Q)$, or of type $(t, R)$, respectively) for some $t \geq 0$.

We recall that a block $B$ of a graph $G$ is small if $B \in \{\mathcal{B}_3 \cup \mathcal{B}_4 \cup \mathcal{B}_5\}$
 
A cut-vertex $v$ of an edge-coloured block $B$ of a graph $G$ is called to be of $\textit{type}$ $(a,b)$ with respect to $B$ if $deg_B(v)=2$, one edge incident with $v$ is coloured with $a$ and the second one with $b$, 
$a \neq b$, $B \in \mathcal{B}_3 \cup \mathcal{B}_4 \cup \mathcal{B}_5$. Otherwise it is called $\textit{universal}$ with respect to $B$.

\begin{theorem}\label{thm:dia3}
Let $G$ be a connected graph with $\diam(G)\geq 3$. The following statements hold:
\begin{enumerate}
\item[\rm (i)] If $G$ is $2$-connected or
$0 \leq \Delta(C(G))\leq 2$, then $3 \leq \lec(G) \leq 4$.  

Furthermore, $\lec(G) = 4$ if and only if $G$ contains a block $B$ which is a cycle of length 4 having three distinct vertices $b_1, b_2, b_3 \in V(B)$ with $\deg_{C(G)}(b_1) = \deg_{C(G)}(b_2) =  \deg_{C(G)}(b_3) = \mathrm{rw}(C(G)) = 2$, or
\item[\rm (ii)] If $\Delta(C(G)) \geq 3$,  then $\mathrm{rw}(C(G))\leq \lec(G) \leq \mathrm{rw}(C(G)) + 1$.\\
Furthermore, $\lec(G) = \mathrm{rw}(C(G)) + 1$ if and only if

\bigskip
\begin{itemize}
    \item[\rm (1)] 
    $G$ contains a block $B$ which is a cycle of length 5 having two distinct vertices $b_1, b_2 \in V(B)$ with $\deg_{C(G)}(b_1) = \deg_{C(G)}(b_2) = \mathrm{rw}(C(G)) = 3$, or
    \item[\rm (2)]
    $G$ contains a block $B$ which is a cycle of length 4 and $\deg_{C(G)}(b) = \mathrm{rw}(C(G)) = 3$ for a vertex $b \in V(B)$, or
    \item[\rm (3)]
    $G$ contains a block $B$ which is a cycle of length 3 having  two distinct vertices $b_1, b_2 \in V(B)$ with 
    $\deg_{C(G)}(b_1) + \deg_{C(G)}(b_2) \geq  2 \mathrm{rw}(C(G)) - 1$, or
    \item[\rm (4)]
    $G$ contains a block $B$ which is a diamond $D$, $V(D) = \{x, v, y, w\}$, $\mathrm{rw}(C(G)) = 3$, and 
    $\deg_{C(G)}(x) + \deg_{C(G)}(y) = 6$ or 
    $\sum_{b \in V(B)}\deg_{C(G)}(b) \geq 11$.
\end{itemize}
\end{enumerate}
\end{theorem}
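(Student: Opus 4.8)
\textbf{Proof strategy for Theorem \ref{thm:dia3}.}

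The plan is to analyse the block–cutpoint structure of $G$ and combine the colourings of the individual blocks established in Lemmas \ref{le:tre}, \ref{le:R_t}, \ref{le:Q_t}, \ref{le:P_t}, \ref{le:2c}, and in Corollary \ref{co:1}. Since $\diam(G)\geq 3$, every loose edge-colouring needs at least three colours, and since the cut-edge graph $C(G)$ together with the small blocks of $G$ must be coloured so that the attaching leaves at each vertex receive distinct colours, we get the lower bounds $\lec(G)\geq 3$ in case (i) and $\lec(G)\geq \mathrm{rw}(C(G))$ in case (ii): indeed, an edge $uv$ of $C(G)$ together with a neighbouring cut-edge family forces at least $\deg_{C(G)}(u)$ colours at $u$, and the ``weight minus one'' bound comes from the fact that two cut-edges incident to the two endpoints of a common cut-edge must all be pairwise distinguishable along bi-chromatic paths of length two. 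For the upper bounds I would first colour the cut-edge graph $C(G)$ optimally (it is a forest, so Lemma \ref{le:tre} gives a colouring with $\mathrm{rw}(C(G))$ colours, or with $\max\{3,\Delta(C(G))\}$ colours overall) and then extend to each non-trivial block: a large block uses only colours $\{1,2,3\}$ by Corollary \ref{co:1}, and each small block is of type $R$, $Q$, or $P$ (or is obtained from one of the graphs in $\mathcal B_3\cup\mathcal B_4\cup\mathcal B_5$ with leaves attached), so Lemmas \ref{le:R_t}, \ref{le:Q_t}, \ref{le:P_t} give colourings with at most $\mathrm{rw}(C(G))$ or at most $\mathrm{rw}(C(G))+1$ colours, the $+1$ occurring exactly in the exceptional configurations (1)--(4).

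Next I would verify that these block colourings glue together into a global loose edge-colouring. The key point is that for two vertices $u,w$ in different blocks, the unique path in the block–cutpoint tree passes through cut-vertices, and it suffices to produce, inside each block traversed, a loose $u',v'$-subpath (or a short bi-chromatic one when the block is small and $u'$ or $v'$ sits at a degree-two cut-vertex of type $(a,b)$). For large blocks this is immediate from Corollary \ref{co:1}; for small blocks one uses the explicit colourings of Lemmas \ref{le:R_t}--\ref{le:P_t}, checking that a cut-vertex can always be made ``universal'' with respect to its block unless forced otherwise, and handling the finitely many exceptional pairs (the triangle-, $C_4$-, and $C_5$-vertices labelled $x,y,z$) by hand as is already done in the proofs of those lemmas. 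The colour budget is respected because distinct trivial blocks around a common cut-vertex receive distinct colours from $[1,\mathrm{rw}(C(G))]$ (or $[1,\max\{3,\Delta(C(G))\}]$), and the three colours used on large/small blocks can always be taken inside that palette since $\mathrm{rw}(C(G))\geq 3$ in case (ii) and the palette has size $\geq 3$ in case (i).

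The heart of the argument — and the main obstacle — is the ``if and only if'' characterisation of when the extra colour is unavoidable. For the ``if'' direction one must show that in each of the configurations listed, no colouring with $\mathrm{rw}(C(G))$ colours (resp. $3$ colours in case (i)) can be loose: this is exactly the counting/contradiction argument already carried out in Lemmas \ref{le:R_t}(3), \ref{le:Q_t}, \ref{le:P_t} for the model graphs $R_t$, $Q_t$, $P_t$, and one must check that attaching the cut-edges of $C(G)$ to a block $B$ of the stated cycle type realises precisely those model graphs as subgraphs after restricting the colouring, so the contradiction transfers. For the ``only if'' direction — the hard part — one must show that if $G$ contains no such block, then a colouring with $\mathrm{rw}(C(G))$ colours (resp. $3$) exists; this requires arguing that in every small block $B$ of $G$ that is \emph{not} one of the forbidden configurations, Lemmas \ref{le:R_t}--\ref{le:P_t} supply a loose colouring using at most $\mathrm{rw}(C(G))$ colours \emph{and} leaving every cut-vertex of $G$ lying in $B$ either universal or of a type $(a,b)$ compatible with the colours already committed on $C(G)$ at that vertex — a compatibility bookkeeping that I expect to be the most delicate routine verification, to be organised by a case split on $\cir(B)\in\{3,4,5\}$ and on the degrees $\deg_{C(G)}(b)$ for $b\in V(B)$, mirroring the case analyses of the preceding lemmas.
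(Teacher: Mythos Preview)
Your proposal is correct and follows essentially the same approach as the paper: decompose $G$ via its block--cutpoint tree, colour the forest $C(G)$ by Lemma~\ref{le:tre}, colour each non-trivial block (large via Corollary~\ref{co:1}, small via Lemmas~\ref{le:R_t}--\ref{le:P_t} and the explicit colourings of $\mathcal{B}_3\cup\mathcal{B}_4\cup\mathcal{B}_5$ from Lemma~\ref{le:2c}), and then verify that loose paths glue across cut-vertices. The only procedural difference is that the paper roots the block--cutpoint tree at a single block and propagates the colouring outward (tracking, at each cut-vertex $v$, whether $v$ is of ``type $(a,b)$'' or ``universal'' in the already-coloured block, and choosing the colouring of the next block accordingly), whereas you propose to colour $C(G)$ globally first and then fill in the non-trivial blocks; both orderings lead to the same compatibility bookkeeping you correctly flag as the delicate step.
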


\begin{proof}
{\bf{Part 1. Colouring.}} Let $\mathcal{D} = \{K_3, C_4, D, K_4, C_5, C_5^1, C_5^2, \bar C_5^2, C_5^3, \bar C_5^3, C_5^4, K_5\}$ be a set of specific small blocks. Recall that any large block and any small block, other than that from $\mathcal{D}$, has a loose edge-colouring with three colours. Between any two distinct vertices of large blocks and any two vertices of small blocks other than that from $\mathcal{D}$, except two specific $x$ and $y$ of small blocks, there exists a loose path (Recall, that a path is loose if has length at least three, which edges are coloured with at least three colours). The vertices $x$ and $y$ are connected with three mutually distinct bi-chromatic $x, y$-path. See the proof of Lemma \ref{le:2c}.

Our colouring follows the properties of the block-cutpoint graph $B(G)$ of $G$. Choose a vertex of $B(G)$ corresponding to a block $B$ as a root. We colour edges of the blocks from this root to the leaf blocks. First we colour edges of $B$ as described in Lemma \ref{le:2c} if $B$ is $2$-connected and with colour 1 if $B$ is trivial. If $B$ is trivial (i.e. $B = \{e\}$, where $e$ is an edge) we colour all edges of the component of $C(G)$ containing the edge $e$ according to the method used in the proof of Lemma \ref{le:tre}.

General Step: Let $v$ be a cut vertex on $B$ and let $B_1, \dots, B_k, S_1,\dots,S_l, T_1, \dots,T_r$ be other blocks containing $v$ as cut-vertex, where $B_i$, for $i \in [1,k]$, is a large block, $S_j$, for $j \in [1,l]$, is a small block, and  $T_s$, for $s \in [1,r]$, is a trivial block.
First we colour large blocks, next small blocks different from blocks from $\mathcal{D}$, then small blocks from $\mathcal{D}$, and, finally, trivial blocks.

We distinguish three cases.

\textbf{Case 1.} Let $deg_B(v)=1$ and $\varphi(wv) = a$ be a colour of the unique edge $wv$ in $B$. Then for every
$i \in [1,k]$ the edges of the block $B_i$ are coloured with colours from the color set $\{a, b, c\}$ as stated in Corollary \ref{co:1}.

If $S_j$, $j \in [1,l]$, is a small block from $(\mathcal{B}_3 \cup \mathcal{B}_4 \cup \mathcal{B}_5)\setminus \mathcal{D}$ and $\deg_{S_j}(v) = 2$, then the edges incident to $v$ in $S_j$ are colored with colours from $\{a, b, c\}$ so that the vertex $v$ becomes of type $(b,c)$, where $b \ne a \ne c$. Next we finish the colouring of edges of $S_j$ as described in the proof of Lemma \ref{le:2c}.
If $\deg_{S_j}(v) \geq 3$, then we colour $S_j$ directly as in Lemma \ref{le:2c}.

If $S_j \in \mathcal{D}$, then its edges are coloured together with all trivial blocks (edges) incident to the vertices of the block $S_j$. In this case we colour the subgraph $H$ of type $(t, O)$ for $t \geq 1$ and $O \in \{P, Q, R\}$ as described in Lemmas \ref{le:P_t}, \ref{le:Q_t}, and \ref{le:R_t}, respectively, taking into consideration the following three facts: (i) The edges of the longest cycle of $H$ incident with the vertex $v$ have to obtain in $H$ the colours $b$ and $c$. (ii) Some of edges of $H$ at $v$ can already be coloured (This can happen if the block $B$ is trivial, or there are several blocks of types from $\{P, Q, R\}$ at the considered cut vertex $v$). (iii) If $H$ is of type $(1, R)$, we colour its edges with three colours as in $R_1$ because $H \subset R_1$.   

If $S_j \in \mathcal{D}$ and there no trivial blocks incident to vertices of $S_j$, i.e. $S_j$ is subgraph of type $(0, O)$, we colour its edges as in Lemma \ref{le:2c} except of the case when $S_j = C_3 = K_3$. In the later case the edges of $C_3$ are coloured with three distinct colours. 
 
Let there be trivial blocks at $v$, i.e., $r \geq 1$. 
If this block is a part of a subgraph of type $O \in \{P, Q, R\}$ at $v$, then all trivial blocks are coloured as a part of the colouring of the first considered subgraph of type $O$.
If there is no subgraph of type $O \in \{P, Q, R\}$ at $v$, then
the edges of all trivial blocks (i.e. edges) are part of a component of $C(G)$. In this case we colour all edges of this component as described in the proof of Lemma \ref{le:tre}.

\textbf{Case 2.} Let $deg_B(v)=2$ and the vertex $v$ be of type $(a,b)$. Then the large block $B_i$, for any $i \in [1,k]$ is coloured with colours from the color set $\{a, b, c\}$ as in the proof of Lemma \ref{le:2c}.

The block $S_j$, for any $j \in [1,l]$, $S_j \notin \mathcal{D}$, is handled as a large block if $v$ has, in it, degree at least three, or such that the vertex $v$ becomes of type $(a, b)$ in the colouring of $S_j$, if $v$ has degree two. If $S_j \in \mathcal{D}$ is a subgraph $H$ of type $O \in \{P, Q, R\}$, then we consider the subgraph $H$ of type $O$ and colour it as described in Case 1.

Trivial blocks $T_s$ are handled in the same way as in Case 1.

\textbf{Case 3.} Let $deg_B(v)\geq 3$. Our colouring in this case continues as in Case 2 keeping in mind that all small blocks $S_j$, $j \in [1,l]$, have to be colored in such a way that the cut-vertex $v$ becomes of type $(a, b)$ if it is of degree two in $S_j$.

If all blocks containing $v$ as a cut-vertex are colored, $v$ is called $\textit{ready}$. 

To continue in the colouring of $G$ a next, not ready, cut-vertex from a colored block is chosen.

The procedure of colouring finishes if every cut vertex of $G$ is ready.

Observe, see Lemmas \ref{le:tre}, \ref{le:R_t}, \ref{le:Q_t}, \ref{le:P_t} and \ref{le:2c}, that our colouring uses colours from the set $M = [1,\mathrm{rw}(C(G))]$ except for the cases listed in our Theorem \ref{thm:dia3}. These exceptional cases use the colours from the colour set $M^* = [1, 4]$ if $G$ contains a subgraph $H$ of type $Q$ described in Lemma \ref{le:Q_t} (i) and from the colour set $M' = [1,\mathrm{rw}(C(G)) +1]$ if $G$ contains subgraphs $H$ of type $O \in \{P, Q, R\}$ described in the Lemmas \ref{le:P_t} (ii), \ref{le:Q_t} (ii), and \ref{le:R_t} (iii), respectively.

{\bf{Part 2. Loose connectivity}}. To finish the proof of the theorem it is necessary to show that any two distinct vertices $w_1$ and $w_2$ are connected by an edge $w_1w_2$, or bi-chromatic $w_1,w_2$-path of length 2, or a loose (with at least three different colours) $w_1,w_2$-path of length at least 3. Such a path exists if both $w_1$ and $w_2$ belong to the same block, to the same component of $C(G)$, or to the same subgraph $H$ of type $O \in \{P, Q, R\}$. 

If there is an $w_1,w_2$-path going through two vertices, say $u_1$ and $u_2$, of a large block or of a small block, then the part between them can be replaced by a loose $u_1,u_2$-path lying inside the block.

Hence, it is sufficient to show the existence of such a loose $w_1,w_2$-path for any two blocks $D_1$ and $D_2$ with a common cut-vertex $z$, $w_1\in V(D_1)$ and $w_2\in V(D_2)$, $w_1\neq z\neq w_2$.

If the block $D_1$ is large, then there is a loose $w_1, z$-path in $D_1$, which together with a $z,w_2$-path in $D_2$ gives a needed loose $w_1, w_2$-path.

Let the block $D_1$ be small and $D_2$ be either small or trivial. If the vertex $w_1$ is not corresponding to any of the vertices $x$ and $y$ of $D_1 \in \{K_{2,s}, K'_{2,s}, K^+_{2,s}\}$ or to any of the vertices $x, y$ and $z$ of $D_1 \in \{P_{2,s}, P'_{2,s}, P''_{2,s}\}$, respectively, then 
there is, in $D_1$, a loose $w_1,z$-path. This path together with a $z, w_2$-path gives a required loose $w_1,w_2$-path.

If the vertex $w_1$ and the vertex $w_2$ are corresponding to some of the above mentioned vertices $x, y$, or $z$ of $D_1$ and of $D_2$, respectively, Then one can easy recognise in $G$ a loose $w_1, w_2$-path of length 3 or 4.

\end{proof}

\bigskip
\noindent{\bf Declaration of competing interest}

The authors declare that they have no known competing financial interests or personal relationships that could have appeared to influence the work reported in this paper.

\medskip
\noindent{\bf Acknowledgements}

The research of all three authors was supported in part by the DAAD-PPP project "Colourings and connection in graphs" with project-ID 57210296.

The work of the second author was supported by the Slovak Research and Development Agency under the contract No. APVV-19-0153 and by the Slovak VEGA grant 1/0574/21.

\end{document}